\newtheorem{Th}{Theorem}
\newtheorem{Lemma}{Lemma}
\def\ps@pprintTitle{%
 \let\@oddhead\@empty
 \let\@evenhead\@empty
 \def\@oddfoot{Preprint submitted \it\hfill\today}%
 \let\@evenfoot\@oddfoot}
\begin{document}
\journal{}
\begin{frontmatter}
\title{Reliable A Posteriori Error Estimator for a Multi-scale Cancer Invasion Model}
\author[1,2]{Gopika P. B.}
\ead{gopikapb23@iisertvm.ac.in}
\author[1]{Nishant Ranwan}
\ead{nishant21@iisertvm.ac.in}
\author[1,2]{Nagaiah Chamakuri}
\ead{nagaiah.chamakuri@iisertvm.ac.in}
\address[1]{School of Mathematics, IISER Thiruvananthapuram, 695551 Kerala, India.}
\address[2]{Center for High-Performance Computing, IISER Thiruvananthapuram, 695551 Kerala, India}

\begin{abstract}
In this work, we analyze the residual-based a posteriori error estimation of the multi-scale cancer invasion model, which is a system of three non-stationary reaction-diffusion equations. We present the numerical results of a study on a posteriori error control strategies 
for FEM approximations of the model.
In this paper, we derive a residual type error estimator for the cancer invasion model and illustrate its practical performance on a series of computational tests in three-dimensional spaces.
We show that the error estimator is reliable and efficient regarding the model's small perturbation parameters. 
\end{abstract}
\begin{keyword}
cancer invasion model, residual-type a posteriori estimator, 
multilevel finite element method, spatial grid adaptivity,  and haptotaxis effect.
\end{keyword}

\end{frontmatter}
\section{Introduction}

A cell is known to be the fundamental unit of life in all living organisms. For the organisms to operate accurately, these cells must develop and split in a controlled manner according to a specific set of rules. Generally, cells are immobile and can develop, multiply, and kill themselves in a self-regulated mode. Occasionally, cells may expand freely without considering the standard ratio of growth and death. Cells damage host tissue if they develop and reproduce by forgetting the body's needs and constraints.  
In a broad sense, cancer is a condition in which normal cells begin to replicate uncontrollably. It is a  category of disease that has numerous origins and evolves over time and space. Therefore, cancer is a complex phenomenon to foresee. Comprehending the mechanism of cancer progression is critical to detecting and treating this disease. Understanding the process of cancer evolution relies on numerical simulations to a great extent, which entrusts us to see the growth and spreading of the tumor. Cancer models are treated as reaction-diffusion equations in numerous works of literature \cite{anderson2000mathematical}. 

\par
There is ample literature for a posterior analysis of finite element methods for elliptic and parabolic equations, to note a few \cite{Verfuerth13, Ainsworth_book00,estep2000estimating}. 
The robust, reliable, and efficient residual-based error estimate for the singularly perturbed reaction-diffusion problem was derived in \cite{Verfurth_NM98,Ainsworth_CMAME14,Ainsworth_SINUM99, Vejchodsky_IMAJNA06}.
In \cite{Verfuerth13}, reliable and efficient residual-based error estimators are discussed for linear and nonlinear elliptic equations and linear and quasilinear parabolic equations. In \cite{estep2000estimating}, the duality approach is used for a posteriori error estimation for a system of reaction-diffusion equations. To the author's knowledge, theoretical a-posterior error estimates  for the cancer invasion model were not done in the past. Also, previous works focused less on a coupled system of nonlinear parabolic equations to derive the theoretical estimates, where one unknown depends on the gradient of the other.
The basic idea behind our work can be traced back to the method of residual-based error estimation in \cite{Verfuerth13}.

\par
Numerical techniques to solve the tumor invasion model and associated continuum mathematical models are discussed in the following. Solving a mathematical model of angiogenesis by using the finite difference scheme was presented in  \cite{Anderson_BMB98, chaplain2005mathematical}.
In \cite{Daarud_IJNMBE12}, the research delved into the examination of the four-species tumor growth model through the application of a two-dimensional mixed Finite Element Method (FEM). Meanwhile, \cite{Vilanova_IJNMBE13} presented numerical findings for the continuum tumor invasion model, employing the FEM while considering the expansion of capillaries within a two-dimensional spatial domain.
To enhance computational efficiency in practical real-world scenarios, Adaptive Mesh Refinement (AMR) techniques, whether in the temporal or spatial domain, have found favorable utilization, as discussed in references \cite{Cherry_PRL00, Trangenstein_JCP04, Peterson_IJNME07, NP_AMC22, Nagaiah_AMC19}.
In the study documented in \cite{Kolbe_NA14}, various time discretization methods and the adaptive finite volume approach were employed for conducting two-dimensional simulations. Furthermore, \cite{Zheng_BMB05} harnessed an adaptive Finite Element Method (FEM) to address equations related to tumor angiogenesis in a two-dimensional context. Applied adaptive FEM to investigate a simplified three-equation reaction model that computes aspects of tumor-induced angiogenesis deterministically in \cite{Peterson_IJNME07}. The study presented numerical results for geometric models across dimensions, specifically considering $d-$dimensional geometry,  where \(d=\{1,2,3\}\) and utilized explicit time-stepping schemes.
Recently, an administration technique for the bookkeeping of AMR on (hyper-)rectangular meshes is presented in \cite{Kolbe_JCAM22} and a computational  framework of space-time adaptivity for this
mode in our previous work \cite{Aswin_JCAM23}. In \cite{Aswin_JCAM23}, a computational framework based on the parallel space-adaptive techniques was presented based on the gradient type error estimator for spatial discretization. Still, the rigorous theoretical estimates of the error estimator were not presented. The previous studies investigated cancer or related models, and none of the works showed the theoretical study of a-posteriori error estimates and 
their numerical converge studies of cancer invasion models. This is the main focus of the current paper.

\par
Let $u(x,t)$, $v(x,t)$, and $w(x,t)$ be the three unknowns describing the cancer cell's density, extracellular matrix (ECM) density, and concentration of matrix-degrading enzymes (MDE), respectively. The governing equations for the dimensionless form of the cancer invasion model are as follows.
\begin{eqnarray} \label{Model:dim}
\frac{\partial u}{\partial t} & = & \nabla \cdot (d_1(u,v,w)\nabla u) - \nabla \cdot (\chi_u(v) u \nabla v) + \lambda u(1-u-v),\nonumber \\
\frac{\partial v}{\partial t} & = & \rho v(1-u-v)-\eta v w,\\
\frac{\partial w}{\partial t} & = & d_2\Delta w + \alpha u(1-w)-\beta w,\nonumber
\end{eqnarray}
where $d_1(u,v,w)$ is the density-dependent nonlinear diffusion coefficient, ${\chi_u(v)}$ is the haptotactic sensitivity function, $d_2$ is diffusion constant, and $ \lambda, \rho, \eta, \alpha, \beta$ are positive constants.\\
Assuming the interactions of the cancer cells with ECM and the degradation of ECM by MDE take place in an isolated system, zero-flux type boundary conditions are imposed on the model.
\begin{align}\label{boundary_conditions}
\xi\cdot\left(-d_1(u,v,w)\nabla u+\chi_u(v)u\nabla v\right)&=0\text{  on }\partial\Omega\times[0,T],\nonumber\\
\dfrac{\partial w}{\partial \xi}&=0\text{  on }\partial\Omega\times[0,T],
\end{align}
where $\xi$ is the outward normal vector on $\partial\Omega$.
The initial conditions for the given system of equations \eqref{Model:dim} are
\begin{align}\label{initial_conditions}
    u_0&=\left\{\begin{array}{cc}
        \exp{\left(\frac{-r^2}{\epsilon}\right)}, & r\in[0,0.25], \\
        0, & r\in(0.25,1]
    \end{array}\right.\nonumber\\
    v_0&=1-0.5u_0,\\
    w_0&=0.5u_0,\text{  in }\Omega\nonumber
\end{align}
with $\epsilon=0.01$ and $r^2=x_1^2+x_2^2+x_3^2$. \\

\par
In this current work, we focus on the
theoretical framework of a posteriori error estimates using the residual-based error estimator, and
their numerical realization is investigated. This is one of the main goals of this paper.
In section \ref{sec:weak-form}, the weak formulation of the model problem \eqref{Model:dim} and its space and temporal discretization are discussed. In section \ref{sec:error_est}, first, we derive a relationship between the error and residuals for the given problem. And then, a residual-based error estimate is proposed by finding an upper bound for the residuals. The numerical results are discussed in section \ref{sec:num-res}.

\section{Weak formulation}
\label{sec:weak-form}
Let $ \Omega \subset \mathbb{R}^3 $ be the Lipschitz continuous bounded domain. As usual, we denote $H^1(\Omega)$ as the Sobolev space of the functions in $L^2(\Omega)$ whose weak derivatives also belong to $L^2(\Omega)$. To define spaces involving time, let $ X$ be a real Banach space equipped with norm $ \|\cdot\|$ and $ T(>0) \in \mathbb{R} $. The space $ L^p(0,T;X) $ consists of all measurable functions  $ u~:~ [0,T] \rightarrow X $ with
\begin{align*}
     \|u\|_{L^p(0,T;X)} ~&:=~\left(\int_{0}^{T} \|u(t)\|^p dt \right)^{1/p} < \infty \text{  for } 1 \leq p < \infty \\
     \|u\|_{L^\infty (0,T;X)} ~&:=~\underset{0\leq t\leq T}{ess~sup}\|u(t)\| ~<\infty
\end{align*}
The space $ H^1(0,T;X) $ 
consists of all functions $ u \in L^2(0,T;X) $ such that $ u' $ exists in the weak sense and belongs to $ L^2(0,T;X) $.\par 
\begin{align*}
    \|u\|_{H^1(0,T;X)} ~:=~ \left(\int_{0}^{T} \|u(t)\|^2~+~\|u'(t)\|^2 dt\right)^{\frac{1}{2}}
\end{align*} 
Let $X(\Omega)$ be a Banach space and $X^*$ be its dual. Let $\|\cdot\|_*$ denote the dual norm of elements in $X^*$. So, from the definition of the dual norm, we have
\begin{align*}
    \|\phi\|_* = \underset{v\in X\backslash{0}}{sup}\dfrac{<\phi, v>}{\|v\|},
\end{align*}
where $<\cdot,\cdot>$ denotes the duality pairing between $X$ and $X^*$. We refer \cite{evans2010partial, adams2003pure} for more details on Sobolev spaces and Bochner spaces. To derive the weak formulation of model problem \eqref{Model:dim}, we multiply the equations \eqref{Model:dim} by test functions and integrate over $ \Omega $. After applying  integration by parts, we get the weak formulation of the model problem:

For given \(u_0,v_0,w_0 \in L^2(\Omega)\) find \(u,v,w \in L^2(0,T;H^1(\Omega)) \) with \(\displaystyle\frac{\partial u}{\partial t},\frac{\partial v}{\partial t},\frac{\partial w}{\partial t} \in   L^2(0,T;(H^1(\Omega))^*)\) such that
\begin{eqnarray} \label{w:form}
&&\left(\frac{\partial u}{\partial t}, \psi_1\right)+  (d_1(u,v,w)\nabla u, \nabla \psi_1)- (\chi_u(v) u \nabla v, \nabla \psi_1)=(\lambda u(1-u-v),\psi_1),\nonumber \\
&&\left(\frac{\partial v}{\partial t}, \psi_2\right)=(\rho v(1-u-v),\psi_2)-(\eta v w,\psi_2),\\
&&\left(\frac{\partial w}{\partial t},\psi_3 \right)+  (d_2\nabla w, \nabla \psi_3)=(\alpha u(1-w),\psi_3)-(\beta w, \psi_3),\nonumber
\end{eqnarray}
for all \( \psi_1,\psi_2,\psi_3 \in H^1(\Omega)\). Further, \((H^1(\Omega))^*\) is the dual space of \(H^1(\Omega))\).\\

To define the space discretization of weak formulation \eqref{w:form}, we adopt the conforming Galerkin method. Let $\Omega_h$ be the shape regular triangulation of $\Omega$ and $V_h$ be finite-dimensional subspace of $H^1(\Omega)$, where $h$ is the discretization parameter. The discrete problem reads as : find $u_h, v_h, w_h \in V_h$ such that for a.e. $ t\in (0,T) $ and for all $\psi_{1,h},\psi_{2,h},\psi_{3,h} \in V_h$ 
\begin{eqnarray}\label{space_dis}
    &&\left(\frac{\partial u_h}{\partial t}, \psi_{1,h}\right)+  (d_1(u_h,v_h,w_h)\nabla u_h, \nabla \psi_{1,h})- (\chi_u(v_h) u_h \nabla v_h, \nabla \psi_{1,h})\nonumber\\&&\hspace{6cm}=(\lambda u_h(1-u_h-v_h),\psi_{1,h}),\nonumber \\
&&\left(\frac{\partial v_h}{\partial t}, \psi_{2,h}\right)=(\rho v_h(1-u_h-v_h),\psi_{2,h})-(\eta v_h w_h,\psi_{2,h}),\\
&&\left(\frac{\partial w_h}{\partial t},\psi_{3,h} \right)+  (d_2\nabla w_h, \nabla \psi_{3,h})=(\alpha u_h(1-w_h),\psi_{3,h})-(\beta w_h, \psi_{3,h}).\nonumber
\end{eqnarray}

For each \( (t_i, t_j) \subset (0, T )\), consider the function space
\[
\mathcal{L}(t_i, t_j)=\Big\{z| z \in L^2(t_i, t_j ;H^1(\Omega)) \cap L^\infty(t_i, t_j;L^2(\Omega)), \frac{\partial z}{\partial t} \in L^2(t_i, t_j;(H^1(\Omega)^*) \Big\}
\]

We apply the backward Euler method for time discretization. Let \( 0=t^0<t^1 <t^2 <\cdots<t^{J}=T \) be a partition of the considered time interval \([0,T]\) with step size \(\tau^n=t^n-t^{n-1}, ~n=1,2,\ldots, J\). Thus it follows from \eqref{space_dis} 
\begin{eqnarray} \label{d:form}
&&\left(\frac{u_h^{n+1}-u^n_h}{\tau^{n+1}}, \psi_{1,h}\right)+  (d_1(u_h^{n+1},v_h^{n+1},w_h^{n+1})\nabla u_h^{n+1}, \nabla \psi_{1,h})\nonumber\\ &&\hspace{0.5in}- (\chi_u(v_h^{n+1}) u_h^{n+1} \nabla v_h^{n+1}, \nabla \psi_{1,h})-(\lambda u_h^{n+1}(1-u_h^{n+1}-v_h^{n+1}),\psi_{1,h})=0,\nonumber \\[2mm]
&&\left(\frac{v_h^{n+1}-v^n_h}{\tau^{n+1}}, \psi_{2,h}\right)-(\rho v_h^{n+1}(1-u_h^{n+1}-v_h^{n+1}),\psi_{2,h})+(\eta v_h^{n+1} w_h^{n+1},\psi_{2,h})=0,\\[2mm]
&&\left(\frac{w_h^{n+1}-w^n_h}{\tau^{n+1}},\psi_{3,h} \right)+  (d_2\nabla w_h^{n+1}, \nabla \psi_{3,h})-(\alpha u_h^{n+1}(1-w_h^{n+1}),\psi_{3,h})\nonumber\\&&\hspace{4in}+(\beta w_h^{n+1}, \psi_{3,h})=0,\nonumber
\end{eqnarray}

where $u^{n+1}_h(t), v^{n+1}_h(t), w^{n+1}_h(t) \in V_h$ for $t\in[n\tau^n, (n+1)\tau^n), 1\leq n\leq J-1$. To derive the relationship between the error and residuals, we have the following assumptions on nonlinear diffusion and sensitivity function of $\eqref{Model:dim}$.\par
For all $s_1,s_2,s_3, \Tilde{s}_1,\Tilde{s}_2,\Tilde{s}_3\in \mathbb{R}$,
\begin{align*}
     \textbf{H1}:&~ m|\xi|^2\leq \xi^Td_1(s_1,s_2,s_3)\xi\leq M|\xi|^2 ~~\text{for }M>m>0,\text{ and }\xi\in\mathbb{R}\\
     \textbf{H2}:& ~|d_1(s_1,s_2,s_3)-d_1(\Tilde{s}_1,\Tilde{s}_2,\Tilde{s}_3)|\leq d\left(|s_1-\Tilde{s}_1|+|s_2-\Tilde{s}_2|+|s_3-\Tilde{s}_3|\right),\\
     \textbf{H3}:&~\chi_u(v)\in L^{\infty}(0,T; L^{\infty}(\Omega))\\
     \textbf{H4}:&~|\chi_u(v_1)-\chi_u(v_2)|\leq L|v_1-v_2|
\end{align*}
 For the existence and uniqueness of the fully discretized system \eqref{d:form}, we refer to \cite{Aswin_JCAM23}.\\
 Here, we provide the finite element setting to derive the error estimates.
 To approximate the model problem (\ref{Model:dim}), we associate a shape regular triangulation $(\mathcal{T}_h^n)_{h>0}$ of $\Omega$ with each time step $t_n$. We denote the set of all faces of the partition $(\mathcal{T}_h^n)$ as $\mathcal{E}_h^n$. Furthermore, let $\tilde{\mathcal{T}}_h^n$ be the refinement of both triangulation $\mathcal{T}_h^n$ and $\mathcal{T}_h^{n+1}$. Also, we denote by $h_K$ the diameter of an element $K\in \tilde{\mathcal{T}}_h^n$ and $h_{K'}$ denotes the diameter of an element $K' \in \mathcal{T}_h^n$ such that $K \subseteq K'$.
\section{A posteriori error estimates}\label{sec:error_est}
This section is devoted to the study of residual operators and error estimation for the discretized problem \eqref{d:form}. In subsection 3.1, we define the residual operators and an upper bound for the error is derived in terms of the given initial data and the residuals. In subsection 3.2, we derive the upper bound for the residuals by splitting them into spatial and temporal residuals.
\subsection{Residual operators}
Suppose that \( u_h^n,v_h^n,w_h^n\) are the fully discretized solution of the problem \eqref{Model:dim} and \(\Omega^n_h\) are the different tessellations at different instants. The collection of final indices \( \{k_n \}_{n=0}^J \) is denoted as \(\bf k\). The linear interpolates \((u_h^{\bf k},v_h^{\bf k},w_h^{\bf k}) \) of continuous functions on \([0,T]\) is given by 
\[
z_h^{\bf k}=\frac{t-t_{n-1}}{\tau_n}z_{h,k_n}^n+\frac{t_n-t}{\tau_n}z^{n-1}_{h,k_{n-1}},
\]
where \(z\) denotes the variables \( u,v,w\) and \(t\in (t_{n-1},t_n], n=1,2,\ldots, J \), and $k_n$ denotes the maximum number of Newton's method iterations for each time step. Let \( \mathcal R(t)\) be the residual operator in the space \( (H^1(\Omega),H^1(\Omega), H^1(\Omega))^*=(H^1(\Omega))^*\times (H^1(\Omega))^* \times (H^1(\Omega))^*  \), and
\[
\langle \mathcal{R}(t), (\psi_1,\psi_2,\psi_3)\rangle =\langle\mathcal{R}_1(t), \psi_1 \rangle +\langle\mathcal{R}_2(t), \psi_2 \rangle +\langle\mathcal{R}_3(t), \psi_3 \rangle,
\]
where \(\langle\mathcal{R}_i(t),\psi_i \rangle, ~i=1,2,3, \) are defined as
\begin{eqnarray}\label{6}
&&\langle \mathcal{R}_1(t), \psi_1\rangle =-\int_{\Omega}\frac{\partial u^{\mathbf{k}}_h}{\partial t} \psi_1 dx-\int_{\Omega}d_1(u^{\mathbf{k}}_h,v^{\mathbf{k}}_h,w^{\mathbf{k}}_h) \nabla u^{\mathbf{k}}_h \nabla \psi_1dx\nonumber\\&&\hspace{1in}+\int_{\Omega}\chi_u(v^{\mathbf{k}}_h)u^{\mathbf{k}}_h \nabla v^{\mathbf{k}}_h \nabla \psi_1 dx+\int_{\Omega}\lambda u_h^{\mathbf{k}}(1-u^{\mathbf{k}}_h-v^{\mathbf{k}}_h)\psi_1 dx, \nonumber\\
&&\langle \mathcal{R}_2(t),\psi_2 \rangle =-\int_{\Omega}\frac{\partial v^{\mathbf{k}}_h}{\partial t }\psi_2 dx+\int_{\Omega} \rho v^{\mathbf{k}}_h(1-u^{\mathbf{k}}_h-v^{\mathbf{k}}_h)\psi_2 dx-\int_{\Omega} \eta v^{\mathbf{k}}_h w^{\mathbf{k}}_h \psi_2 dx, \\
&& \langle \mathcal{R}_3(t), \psi_3 \rangle =-\int_{\Omega} \frac{\partial w^{\mathbf{k}}_h}{\partial t} \psi_3 dx-\int_{\Omega}d_2 \nabla w^{\mathbf{k}}_h\nabla \psi_3 dx+\int_{\Omega}\alpha u^{\mathbf{k}}_h(1-w^{\mathbf{k}}_h)\psi_3 dx\nonumber\\&&\hspace{4in}-\int_{\Omega}\beta w^{\mathbf{k}}_h\psi_3 dx\nonumber
\end{eqnarray}

\begin{Lemma}\label{lem1}
Let \(\mathcal{R}_1,\mathcal{R}_2,\mathcal{R}_3\) be the residuals as defined in \eqref{6}. Then for \(t \in (0,T]\)
\begin{eqnarray}\label{7}
&&C \left(
\|u-u^{\mathbf{k}}_h \|_{\mathcal{L}(0,t)} ^2+\|v-v^{\mathbf{k}}_h \|_{L^2(0,t;H^1(\Omega))} ^2+\|w-w^{\mathbf{k}}_h \|_{\mathcal{L}(0,t)} ^2
\right)\nonumber \\&& \leq \|u_0-\Pi_h^0u_0 \|^2_{L^2(\Omega)}+\|v_0-\Pi_h^0v_0 \|^2_{L^2(\Omega)}+\|w_0-\Pi_h^0w_0 \|^2_{L^2(\Omega)}\nonumber \\&&\hspace{1in}+\|\mathcal {R}_1\|^2_{L^2(0,t;H^1(\Omega)^*)}+\|\mathcal {R}_2\|^2_{L^2(0,t;H^1(\Omega)^*)}+\|\mathcal {R}_3\|^2_{L^2(0,t;H^1(\Omega)^*)}
\end{eqnarray}
\end{Lemma}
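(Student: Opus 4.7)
The plan is to derive error equations by subtracting the weak formulation \eqref{w:form} from the defining relations for the residuals in \eqref{6}, and then to test each of the three resulting equations with its corresponding error. Writing $e_u = u - u_h^{\mathbf{k}}$, $e_v = v - v_h^{\mathbf{k}}$, $e_w = w - w_h^{\mathbf{k}}$, and adding and subtracting the natural cross-terms, the first equation yields
\[
(\partial_t e_u,\psi_1) + (d_1(u,v,w)\nabla e_u, \nabla\psi_1) = \langle\mathcal{R}_1,\psi_1\rangle + \text{(Lipschitz perturbations in } d_1, \chi_u, \text{ and reaction)}.
\]
Analogous equations hold for $e_v$ and $e_w$. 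Choosing $\psi_i = e_i$, the time-derivative term produces $\tfrac12 \tfrac{d}{dt}\|e_i\|^2$, and assumption \textbf{H1} provides coercivity $(d_1(u,v,w)\nabla e_u,\nabla e_u)\geq m\,\|\nabla e_u\|^2$ for the $u$-equation and analogously $d_2\|\nabla e_w\|^2$ for the $w$-equation.

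Next I would treat the perturbation terms. The difference $d_1(u,v,w)\nabla u - d_1(u_h^{\mathbf{k}},v_h^{\mathbf{k}},w_h^{\mathbf{k}})\nabla u_h^{\mathbf{k}}$ splits into the coercive part plus $(d_1(u,v,w)-d_1(u_h^{\mathbf{k}},v_h^{\mathbf{k}},w_h^{\mathbf{k}}))\nabla u_h^{\mathbf{k}}$, which by \textbf{H2} is controlled by $d\,(|e_u|+|e_v|+|e_w|)\,|\nabla u_h^{\mathbf{k}}|$; applying Cauchy--Schwarz and Young with weight $m/4$ absorbs a $\tfrac{m}{4}\|\nabla e_u\|^2$ contribution into the LHS and leaves $L^2$-norms of the errors multiplied by $\|\nabla u_h^{\mathbf{k}}\|_{L^\infty}^2$ on the RHS. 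The haptotactic term is split analogously, $\chi_u(v)u\nabla v - \chi_u(v_h^{\mathbf{k}})u_h^{\mathbf{k}}\nabla v_h^{\mathbf{k}} = \chi_u(v)u\nabla e_v + \chi_u(v)e_u\nabla v_h^{\mathbf{k}} + (\chi_u(v)-\chi_u(v_h^{\mathbf{k}}))u_h^{\mathbf{k}}\nabla v_h^{\mathbf{k}}$, and bounded using \textbf{H3}--\textbf{H4}; the first piece is handled by a Young's inequality of the form $\tfrac{m}{8}\|\nabla e_u\|^2 + C\|\nabla e_v\|^2$. The polynomial reaction terms $u(1-u-v)-u_h^{\mathbf{k}}(1-u_h^{\mathbf{k}}-v_h^{\mathbf{k}})$, $vw - v_h^{\mathbf{k}} w_h^{\mathbf{k}}$, etc., are rearranged into sums of error factors times bounded solution factors, producing only $L^2$-norms of $e_u,e_v,e_w$ on the RHS (relying on $L^\infty$ bounds of the continuous and discrete solutions, established elsewhere). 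Finally, each residual contribution is estimated via $|\langle\mathcal{R}_i,e_i\rangle|\leq \|\mathcal{R}_i\|_*\|e_i\|_{H^1}$ and Young's inequality, which again lets me absorb an $H^1$-semi-norm piece into the already-present coercive terms while deposit­ing $\|\mathcal{R}_i\|_*^2$ on the RHS.

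The main obstacle is the $v$-equation, which has no diffusion and hence no intrinsic mechanism to generate $\|\nabla e_v\|^2$ on the LHS, even though the statement asks for the $L^2(0,t;H^1(\Omega))$ norm of $e_v$. The resolution I intend to exploit comes from the $u$-equation: after the haptotactic splitting above, the quadratic form $(\chi_u(v)u\,\nabla e_v,\nabla e_u)$ and Young's inequality actually produce the \emph{opposite} sign on $\|\nabla e_v\|^2$ (on the RHS), so I would not rely on that path. Instead, I would isolate $\nabla e_v$ by noting that $\partial_t v_h^{\mathbf{k}} \in L^2$ together with \textbf{H3} and the boundedness of $u,v,w,u_h^{\mathbf{k}},v_h^{\mathbf{k}},w_h^{\mathbf{k}}$ in $L^\infty$ allows the $v$-residual to be evaluated with a test function having $H^1$-regularity, and use the inequality
\[
\|e_v\|_{H^1}^2 \leq C\bigl(\|e_v\|_{L^2}^2 + \|R_2\|_{(H^1)^*}^2 + \|e_u\|_{L^2}^2 + \|e_w\|_{L^2}^2 + \|\nabla e_u\|_{L^2}^2\bigr),
\]
obtained by rearranging the $v$-error equation and using Cauchy--Schwarz. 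This couples the $H^1$-control of $e_v$ to quantities already bounded, at the cost of an enlarged constant.

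Finally I would add the three tested equations, absorb every small multiple of $\|\nabla e_u\|^2$, $\|\nabla e_w\|^2$ and $\|e_v\|_{H^1}^2$ into the coercive LHS pieces, integrate in time from $0$ to $t$, and apply the integral Grönwall inequality to the remaining $L^2$-norms of the errors on the right. Taking the essential supremum in $t$ for the $L^\infty(L^2)$ components of $\|\cdot\|_{\mathcal{L}(0,t)}$ and combining with the integrated gradient bounds yields the claimed estimate \eqref{7}, with the constant $C$ depending on $m$, $M$, $d$, $L$, $\lambda,\rho,\eta,\alpha,\beta$, $d_2$, and the $L^\infty$-bounds of the continuous and discrete solutions. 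The contribution from the initial data enters naturally from $\tfrac12\|e_i(0)\|^2 = \tfrac12\|z_0 - \Pi_h^0 z_0\|^2$, since $u_h^{\mathbf{k}}(0)$, $v_h^{\mathbf{k}}(0)$, $w_h^{\mathbf{k}}(0)$ are the $L^2$-projections of the initial data.
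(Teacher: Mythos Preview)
Your overall strategy---subtracting the weak formulation from the residual definition, testing each equation with its own error, invoking \textbf{H1}--\textbf{H4} together with H\"older and Young to bound the cross-terms, absorbing gradient pieces into the coercive left-hand side, integrating in time, and applying Gronwall---is precisely the route the paper takes. One omission in your sketch: the $\mathcal{L}(0,t)$-norm also contains $\|\partial_t(z-z_h^{\mathbf{k}})\|_{L^2(0,t;(H^1)^*)}$, which the paper bounds in a separate step directly from the residual identity \eqref{6} (testing with an arbitrary $\psi$ and estimating each term) after the Gronwall argument has already delivered the $L^\infty(L^2)$ control; you should add this step.

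The genuine gap is your proposed mechanism for $\|e_v\|_{H^1}$. The $v$-equation carries no spatial gradient, so the $v$-error identity reads $(\partial_t e_v,\psi_2) = -\langle\mathcal{R}_2,\psi_2\rangle + (\text{zeroth-order reaction differences},\psi_2)$; there is no term from which ``rearranging and using Cauchy--Schwarz'' can manufacture $\|\nabla e_v\|$, and the displayed inequality you assert simply does not follow. You are right to flag this as the main obstacle---in fact the paper's own proof never establishes an $L^2(0,t;H^1)$ bound for $v-v_h^{\mathbf{k}}$ either: its energy estimate for the $v$-equation yields only $\tfrac{1}{2}\tfrac{d}{dt}\|e_v\|_{L^2}^2 \leq \dots$, and the subsequent Gronwall and combination steps give at most $L^\infty(0,t;L^2)$ control of $e_v$. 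The appearance of $\|v-v_h^{\mathbf{k}}\|_{L^2(0,t;H^1)}^2$ on the left of \eqref{7} is therefore not supported by the argument as written in the paper. So your diagnosis is sharper than the paper's treatment, but the remedy you propose does not work; neither argument, as written, closes this particular gap.
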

\begin{proof}

Since $ (u,v,w) $ is the solution of \eqref{Model:dim} and by definition of $ \mathcal{R}_1 $ we have
\begin{eqnarray*}
    \langle \mathcal{R}_1(t), \psi_1\rangle =-\int_{\Omega}\frac{\partial u^{\mathbf{k}}_h}{\partial t} \psi_1 dx-\int_{\Omega}d_1(u^{\mathbf{k}}_h,v^{\mathbf{k}}_h,w^{\mathbf{k}}_h) \nabla u^{\mathbf{k}}_h \nabla \psi_1dx+\int_{\Omega}\chi_u(v^{\mathbf{k}}_h)u^{\mathbf{k}}_h \nabla v^{\mathbf{k}}_h \nabla \psi_1 dx\\ \\+\int_{\Omega}\lambda u^{\mathbf{k}}_h(1-u^{\mathbf{k}}_h-v^{\mathbf{k}}_h)\psi_1 dx
    +  \int_{\Omega}\frac{\partial u}{\partial t} \psi_1 dx+\int_{\Omega}d_1(u,v,w) \nabla u \nabla \psi_1dx\\ \\-\int_{\Omega}\chi_u(v)u \nabla v \nabla \psi_1 dx-\int_{\Omega}\lambda u(1-u-v)\psi_1 dx. \nonumber\\
\end{eqnarray*}
By taking $ \psi_1 = u-u^{\mathbf{k}}_h $ we get 
\begin{eqnarray}
&&\langle\mathcal{R}_1 , u-u^{\mathbf{k}}_h \rangle =\frac{1}{2}\frac{d}{dt}\|u-u^{\mathbf{k}}_h \|_{L^2(\Omega)}^2+\int_{\Omega}d_1(u,v,w)\nabla u \nabla(u-u^{\mathbf{k}}_h)dx\nonumber \\&&\hspace{3cm}-\int_{\Omega}d_1(u^{\mathbf{k}}_h,v^{\mathbf{k}}_h,w^{\mathbf{k}}_h)\nabla u^{\mathbf{k}}_h \nabla(u-u^{\mathbf{k}}_h)dx 
-\int_{\Omega}\chi_u(v)u\nabla v \nabla (u-u^{\mathbf{k}}_h)dx \nonumber \\&&\hspace{3cm}+\int_{\Omega}\chi_u(v^{\mathbf{k}}_h)u^{\mathbf{k}}_h\nabla v^{\mathbf{k}}_h \nabla (u-u^{\mathbf{k}}_h)dx
-\int_{\Omega}\lambda u(1-u-v)(u-u^{\mathbf{k}}_h)dx\nonumber \\&&\hspace{3cm} +\int_{\Omega}\lambda u^{\mathbf{k}}_h(1-u^{\mathbf{k}}_h-v^{\mathbf{k}}_h)(u-u^{\mathbf{k}}_h)dx 
\end{eqnarray}
Rearranging the terms, we get
\begin{eqnarray}
&&\frac{1}{2}\frac{d}{dt}\|u-u^{\mathbf{k}}_h \|_{L^2(\Omega)}^2+\int_{\Omega}d_1(u,v,w)\nabla (u-u^{\mathbf{k}}_h) \nabla(u-u^{\mathbf{k}}_h)dx = \langle\mathcal{R}_1 , u-u^{\mathbf{k}}_h \rangle  \nonumber \\ 
&&\hspace{2.5cm}+\int_{\Omega}d_1(u^{\mathbf{k}}_h,v^{\mathbf{k}}_h,w^{\mathbf{k}}_h)\nabla u^{\mathbf{k}}_h \nabla(u-u^{\mathbf{k}}_h)dx-\int_{\Omega}d_1(u,v,w)\nabla u^{\mathbf{k}}_h \nabla(u-u^{\mathbf{k}}_h)dx
\nonumber \\ 
&&\hspace{2.5cm}+\int_{\Omega}\chi_u(v)u\nabla v \nabla (u-u^{\mathbf{k}}_h)dx-\int_{\Omega}\chi_u(v)u\nabla v^{\mathbf{k}}_h \nabla (u-u^{\mathbf{k}}_h)dx \nonumber \\ 
&&\hspace{2.5cm}+\int_{\Omega}\chi_u(v)u\nabla v^{\mathbf{k}}_h \nabla (u-u^{\mathbf{k}}_h)dx-\int_{\Omega}\chi_u(v)u^{\mathbf{k}}_h\nabla v^{\mathbf{k}}_h \nabla (u-u^{\mathbf{k}}_h)\nonumber \\
&&\hspace{2.5cm}+\int_{\Omega}\chi_u(v)u^{\mathbf{k}}_h\nabla v^{\mathbf{k}}_h \nabla (u-u^{\mathbf{k}}_h)dx-\int_{\Omega}\chi_u(v^{\mathbf{k}}_h)u^{\mathbf{k}}_h\nabla v^{\mathbf{k}}_h \nabla (u-u^{\mathbf{k}}_h)dx\nonumber \\
&&\hspace{2.5cm}+\int_{\Omega}\lambda u(1-u-v)(u-u^{\mathbf{k}}_h)dx-\int_{\Omega}\lambda u^{\mathbf{k}}_h(1-u-v)(u-u^{\mathbf{k}}_h)dx\nonumber \\
&&\hspace{2.5cm}+\int_{\Omega}\lambda u^{\mathbf{k}}_h(1-u-v)(u-u^{\mathbf{k}}_h)dx-\int_{\Omega}\lambda u^{\mathbf{k}}_h(1-u^{\mathbf{k}}_h-v^{\mathbf{k}}_h)(u-u^{\mathbf{k}}_h)dx\nonumber
\end{eqnarray}

\begin{eqnarray}
&&\hspace{1cm}=\langle\mathcal{R}_1 , u-u^{\mathbf{k}}_h \rangle+\int_{\Omega}(d_1(u^{\mathbf{k}}_h,v^{\mathbf{k}}_h,w^{\mathbf{k}}_h)-d_1(u,v,w))\nabla u^{\mathbf{k}}_h \nabla(u-u^{\mathbf{k}}_h)dx\nonumber\\
&&\hspace{2.5cm}+\int_{\Omega}\chi_u(v)(u-u^{\mathbf{k}}_h)\nabla (v-v^{\mathbf{k}}_h) \nabla (u-u^{\mathbf{k}}_h)dx\nonumber\\
&&\hspace{2.5cm}+\int_{\Omega}\chi_u(v)u^{\mathbf{k}}_h\nabla (v-v^{\mathbf{k}}_h) \nabla (u-u^{\mathbf{k}}_h)dx\nonumber\\
&&\hspace{2.5cm}+\int_{\Omega}\chi_u(v)(u-u^{\mathbf{k}}_h)\nabla v^{\mathbf{k}}_h \nabla (u-u^{\mathbf{k}}_h)dx\nonumber\\
&&\hspace{2.5cm}+\int_{\Omega}(\chi_u(v)-\chi_u(v^{\mathbf{k}}_h))u^{\mathbf{k}}_h\nabla v^{\mathbf{k}}_h \nabla (u-u^{\mathbf{k}}_h)dx\nonumber\\&&\hspace{2.5cm}+\int_{\Omega}\lambda (u-u^{\mathbf{k}}_h)(1-(u-u^{\mathbf{k}}_h)-(v-v^{\mathbf{k}}_h)-u^{\mathbf{k}}_h-v^{\mathbf{k}}_h)(u-u^{\mathbf{k}}_h)dx\nonumber\\&&\hspace{2.5cm}-\int_{\Omega}\lambda (u+v-u^{\mathbf{k}}_h-v^{\mathbf{k}}_h)u^{\mathbf{k}}_h(u-u^{\mathbf{k}}_h)dx.
\end{eqnarray}
We have $ \|\nabla u\|_{L^2(\Omega)} < \infty \text{ and } \|\nabla v\|_{L^2(\Omega)} < \infty $. Using the assumptions (\textbf{H1})--(\textbf{H4}), H\"older's inequality and Minkowski's inequality, we get
\begin{eqnarray}
&&\frac{1}{2}\frac{d}{dt}\|u-u^{\mathbf{k}}_h \|_{L^2(\Omega)}^2 +m\|\nabla(u-u^{\mathbf{k}}_h)\|_{L^2(\Omega)}^2\leq \|\mathcal{R}_1\|_*\|u-u^{\mathbf{k}}_h \|_{L^2(\Omega)}\nonumber\\&&\hspace{1.5cm}+D(\|u-u^{\mathbf{k}}_h \|_{L^2(\Omega)}+\|v-v^{\mathbf{k}}_h \|_{L^2(\Omega)}+\|w-w^{\mathbf{k}}_h \|_{L^2(\Omega)})\|\nabla(u-u^{\mathbf{k}}_h)\|_{L^2(\Omega)} \nonumber\\&&\hspace{1.5cm}+K_1\|u-u^{\mathbf{k}}_h\|_{L^2(\Omega)}\|\nabla(u-u^{\mathbf{k}}_h)\|_{L^2(\Omega)}+K_2\|u^{\mathbf{k}}_h\|_{L^2(\Omega)}\|\nabla(u-u^{\mathbf{k}}_h) \|_{L^2(\Omega)} \nonumber\\&&\hspace{1.5cm}+L'\|v-v^{\mathbf{k}}_h\|_{L^2(\Omega)}\|\nabla(u-u^{\mathbf{k}}_h) \|_{L^2(\Omega)}+\lambda\|u-u^{\mathbf{k}}_h\|^2_{L^2(\Omega)} \nonumber \\
&&\hspace{1.5cm}+\lambda\|u-u^{\mathbf{k}}_h\|_{L^2(\Omega)}\left(\|u-u^{\mathbf{k}}_h\|_{L^2(\Omega)}+ \|v-v^{\mathbf{k}}_h\|_{L^2(\Omega)}\right).
\end{eqnarray}
 Adding $ m\|u-u^{\mathbf{k}}_h\|_{L^2(\Omega)}^2$ on both sides and then applying Young's inequality with $\epsilon>0$,we get 
\begin{eqnarray}
    &&\frac{1}{2}\frac{d}{dt}\|u-u^{\mathbf{k}}_h \|_{L^2(\Omega)}^2 +m\|u-u^{\mathbf{k}}_h\|_{H^1(\Omega)}^2\leq  \frac{1}{2\epsilon}\|\mathcal{R}_1\|_*^2 + \frac{\epsilon}{2}\|u-u^{\mathbf{k}}_h \|_{L^2(\Omega)}^2 + m\|u-u^{\mathbf{k}}_h\|_{L^2(\Omega)}^2\nonumber \\
    &&\hspace{1cm}+\frac{3\epsilon}{2}\|\nabla(u-u^{\mathbf{k}}_h)\|^2_{L^2(\Omega)} + \frac{D^2}{2\epsilon}\left(\|u-u^{\mathbf{k}}_h\|^2_{L^2(\Omega)}+\|v-v^{\mathbf{k}}_h\|^2_{L^2(\Omega)} +\|w-w^{\mathbf{k}}_h\|^2_{L^2(\Omega)} \right)\nonumber \\
    &&\hspace{1cm}+\frac{K_1^2}{2\epsilon}\|u-u^{\mathbf{k}}_h\|_{L^2(\Omega)}^2 + \frac{\epsilon}{2}\|\nabla(u-u^{\mathbf{k}}_h)\|_{L^2(\Omega)}^2 
    + \frac{K_2^2}{2\epsilon}\|u^{\mathbf{k}}_h\|_{L^2(\Omega)}^2+\frac{\epsilon}{2}\|\nabla(u-u^{\mathbf{k}}_h) \|_{L^2(\Omega)}^2\nonumber \\
    &&\hspace{1cm}+\frac{L'^2}{2\epsilon}\|v-v^{\mathbf{k}}_h\|_{L^2(\Omega)}^2 + \frac{\epsilon}{2}\|\nabla(u-u^{\mathbf{k}}_h)\|_{L^2(\Omega)}^2  +\lambda\|u-u^{\mathbf{k}}_h\|^2_{L^2(\Omega)}+K_3\|u-u^{\mathbf{k}}_h\|^2_{L^2(\Omega)} \nonumber\\
    &&\hspace{1cm}+\frac{K_3^2}{2\epsilon}\|v-v^{\mathbf{k}}_h\|^2_{L^2(\Omega)}+\frac{\epsilon}{2}\|u-u^{\mathbf{k}}_h\|^2_{L^2(\Omega)}
\end{eqnarray}
Simplifying further,
\begin{eqnarray}\label{9}
    &&\frac{1}{2}\frac{d}{dt}\|u-u^{\mathbf{k}}_h \|_{L^2(\Omega)}^2+(m-3\epsilon)\|u-u^{\mathbf{k}}_h\|^2_{H^1(\Omega)} \leq  \frac{1}{2\epsilon}\|\mathcal{R}_1\|_*^2 \nonumber\\
    &&\hspace{4cm}+ \left(\frac{\epsilon}{2}+m+\frac{D^2}{2\epsilon}+\frac{K_1^2}{2\epsilon}+\lambda+K_3+\frac{\epsilon}{2}\right)\|u-u^{\mathbf{k}}_h \|_{L^2(\Omega)}^2\\
    &&\hspace{4cm}+\left(\frac{D^2}{2\epsilon}+\frac{K_3^2}{2\epsilon}+\frac{L'^2}{2\epsilon}\right)\|v-v^{\mathbf{k}}_h\|^2_{L^2(\Omega)}+\frac{D^2}{2\epsilon}\|w-w^{\mathbf{k}}_h\|_{L^2(\Omega)}^2\nonumber
\end{eqnarray}

In similar kinds of calculations, we get
\begin{eqnarray}
&&\frac{1}{2}\frac{d}{dt}\|v-v^{\mathbf{k}}_h \|_{L^2(\Omega)}^2 \leq \frac{1}{2\epsilon}\|\mathcal{R}_2 \|^2_{*}+(\frac{3\epsilon}{2}+\rho)\| v-v^{\mathbf{k}}_h\|^2_{L^2(\Omega)}+\frac{\rho'^2}{2\epsilon}\| u-u^{\mathbf{k}}_h\|^2_{L^2(\Omega)}\nonumber\\&&\hspace{4cm}+ \frac{\eta'^2}{2\epsilon}\| w-w^{\mathbf{k}}_h\|^2_{L^2(\Omega)}\label{10}
\\
&&\frac{1}{2}\frac{d}{dt}\|w-w^{\mathbf{k}}_h \|_{L^2(\Omega)}^2 +d_2\|w-w^{\mathbf{k}}_h\|^2_{H^1(\Omega)}\leq \frac{1}{2\epsilon}\|\mathcal{R}_3 \|_*^2 +(\frac{3\epsilon}{2}+d_2)\| w-w^{\mathbf{k}}_h\|_{L^2(\Omega)}\nonumber\\&&\hspace{7.5cm}+\left(\frac{\alpha^2}{2\epsilon}+\frac{\alpha'^2}{2\epsilon}\right)\| u-u^{\mathbf{k}}_h\|^2_{L^2(\Omega)} \label{11}
\end{eqnarray}
Choosing $m>3\epsilon$ and adding \eqref{9}-\eqref{11}, we get
\begin{eqnarray}
&&\frac{1}{2}\frac{d}{dt}\Big[\|u-u^{\mathbf{k}}_h \|_{L^2(\Omega)}^2+  \|v-v^{\mathbf{k}}_h \|_{L^2(\Omega)}^2+ \|w-w^{\mathbf{k}}_h \|_{L^2(\Omega)}^2\Big] +(m-3\epsilon)\|u-u^{\mathbf{k}}_h\|^2_{H^1(\Omega)} \nonumber\\&&\hspace{0.5cm}+ d_2\|w-w^{\mathbf{k}}_h\|^2_{H^1(\Omega)} \leq \frac{1}{2\epsilon}\left(\|\mathcal{R}_1\|_*^2+|\mathcal{R}_2 \|^2_{*}+\|\mathcal{R}_3 \|_*^2\right)+C_1\|u-u^{\mathbf{k}}_h\|_{L^2(\Omega)}^2 \nonumber  \\
&&\hspace{5cm}+C_2\|v-v^{\mathbf{k}}_h\|_{L^2(\Omega)}^2 C_3\|w-w^{\mathbf{k}}_h\|_{L^2(\Omega)}^2\label{12}
\end{eqnarray}
Integrating over $0$ to $t$, we have
\begin{eqnarray}\label{17}
    &&\|u(t)-u^{\mathbf{k}}_h(t) \|_{L^2(\Omega)}^2+\|v(t)-v^{\mathbf{k}}_h(t) \|_{L^2(\Omega)}^2+\|w(t)-w^{\mathbf{k}}_h(t) \|_{L^2(\Omega)}^2\nonumber\\ 
    &&\hspace{0.5cm}+2(m-3\epsilon)\|u-u^{\mathbf{k}}_h\|^2_{L^2(0,t;H^1(\Omega))} +2d_2\|w-w^{\mathbf{k}}_h\|^2_{L^2(0,t;H^1(\Omega))} \nonumber\\&&\hspace{3cm}\leq \|u_0-\Pi_h^0u_0 \|_{L^2(\Omega)}^2+\|v_0-\Pi_h^0v_0 \|_{L^2(\Omega)}^2+\|w_0-\Pi_h^0w_0 \|_{L^2(\Omega)}^2\nonumber\\&&\hspace{3.5cm}+ \frac{1} {\epsilon}\left(\|\mathcal{R}_1\|_{L^2(0,t;H^1(\Omega)^*)}^2+\|\mathcal{R}_2\|_{L^2(0,t;H^1(\Omega)^*)}^2+\|\mathcal{R}_3\|_{L^2(0,t;H^1(\Omega)^*)}^2\right)\nonumber\\
    && \hspace{3.5cm} + C_1\int_0^t\|(u-u^{\mathbf{k}}_h)(s) \|_{L^2(\Omega)}^2ds+C_2\int_0^t\|(v-v^{\mathbf{k}}_h)(s)\|^2_{L^2(\Omega)}ds\nonumber\\
    &&\hspace{3.5cm}+C_3\int_0^t\|(w-w^{\mathbf{k}}_h)(s)\|_{L^2(\Omega)}^2ds
\end{eqnarray}
Using the integral form of Gronwall inequality,
\begin{eqnarray}\label{14}
&&\|u(t)-u^{\mathbf{k}}_h(t)\|^2_{L^2(\Omega)}+\|v(t)-v^{\mathbf{k}}_h(t)\|^2_{L^2(\Omega)}+\|w(t)-w^{\mathbf{k}}_h(t) \|^2_{L^2(\Omega)}\nonumber \\&&\hspace{ 0.5in} \leq e^{Ct}\Bigg( \|u_0-\Pi_h^0u_0 \|^2_{L^2(\Omega)}+\|v_0-\Pi_h^0v_0 \|^2_{L^2(\Omega)}+\|w_0-\Pi_h^0w_0 \|^2_{L^2(\Omega)}\nonumber\\
&&\hspace{0.8in}+\|\mathcal{R}_1 \|_{L^2(0,t;H^1(\Omega)^*)}^2+\|\mathcal{R}_2 \|_{L^2(0,t;H^1(\Omega)^*)}^2+\|\mathcal{R}_3 \|_{L^2(0,t;H^1(\Omega)^*)}^2\Bigg) 
\end{eqnarray}
Since $t\in(0,T]$ is arbitrary, we have
\begin{eqnarray}\label{19}
 &&\|u-u^{\mathbf{k}}_h\|^2_{L^{\infty}(0,t;L^2(\Omega))}+\|v-v^{\mathbf{k}}_h\|^2_{L^{\infty}(0,t;L^2(\Omega))}+\|w-w^{\mathbf{k}}_h \|^2_{L^{\infty}(0,t;L^2(\Omega))}\nonumber \\&&\hspace{ 0.5in} \leq e^{Ct}\Bigg( \|u_0-\Pi_h^0u_0 \|^2_{L^2(\Omega)}+\|v_0-\Pi_h^0v_0 \|^2_{L^2(\Omega)}+\|w_0-\Pi_h^0w_0 \|^2_{L^2(\Omega)}\nonumber\\
&&\hspace{0.8in}+\|\mathcal{R}_1 \|_{L^2(0,t;H^1(\Omega)^*)}^2+\|\mathcal{R}_2 \|_{L^2(0,t;H^1(\Omega)^*)}^2+\|\mathcal{R}_3 \|_{L^2(0,t;H^1(\Omega)^*)}^2\Bigg)    
\end{eqnarray}
Let $\mathcal{M}=\min\{2(m-3\epsilon),2d_2\}$. Then, from \eqref{17} we get
\begin{eqnarray}\label{20}
    &&\mathcal{M}\left(\|u-u^{\mathbf{k}}_h\|^2_{L^2(0,t;H^1(\Omega))} +\|w-w^{\mathbf{k}}_h\|^2_{L^2(0,t;H^1(\Omega))}\right) \leq \|u_0-\Pi_h^0u_0 \|_{L^2(\Omega)}^2\nonumber \\ 
    &&\hspace{2cm}+\|v_0-\Pi_h^0v_0 \|_{L^2(\Omega)}^2+\|w_0-\Pi_h^0w_0 \|_{L^2(\Omega)}^2\nonumber \\ 
    &&\hspace{2cm}+ \frac{1} {\epsilon} \left(\|\mathcal{R}_1\|_{L^2(0,t;H^1(\Omega)^*)}^2 +\|\mathcal{R}_2\|_{L^2(0,t;H^1(\Omega)^*)}^2+\|\mathcal{R}_3\|_{L^2(0,t;H^1(\Omega)^*)}^2\right)\nonumber\\
    && \hspace{2cm} + Ct\Big(\|u-u^{\mathbf{k}}_h \|_{L^{\infty}(0,t;L^2(\Omega))}^2+\|v-v^{\mathbf{k}}_h\|^2_{L^{\infty}(0,t;L^2(\Omega))}\nonumber\\
    && \hspace{8cm}+\|w-w^{\mathbf{k}}_h\|_{L^{\infty}(0,t;L^2(\Omega))}^2\Big)
\end{eqnarray}
From \eqref{6}, 
\begin{eqnarray}
&&\Big\|\frac{\partial }{\partial t}(u-u^{\mathbf{k}}_h)  \Big\|_*+\Big\|\frac{\partial }{\partial t}(v-v^{\mathbf{k}}_h)  \Big\|_*+\Big\|\frac{\partial }{\partial t}(w-w^{\mathbf{k}}_h)  \Big\|_*
 \nonumber \\ &&\hspace{1cm}\leq K \Big[\|u-u^{\mathbf{k}}_h \|_{L^2(\Omega)}+\|v-v^{\mathbf{k}}_h \|_{L^2(\Omega)}+\|w-w^{\mathbf{k}}_h \|_{L^2(\Omega)}\nonumber\\&&\hspace{2cm}+\|\mathcal{R}_1 \|_*+\|\mathcal{R}_2 \|_*+\|\mathcal{R}_3 \|_* \Big]
\end{eqnarray}
Now, we integrate over \(0\) to \(t\), 
\begin{eqnarray}\label{15}
&&\Big\|\frac{\partial }{\partial t}(u(t)-u^{\mathbf{k}}_h(t))  \Big\|_{L^2(0,t;(H^1(\Omega))^*)}+\Big\|\frac{\partial }{\partial t}(v-v^{\mathbf{k}}_h)  \Big\|_{L^2(0,t;(H^1(\Omega))^*)}\nonumber\\&&\hspace{1cm}+\Big\|\frac{\partial }{\partial t}(w-w^{\mathbf{k}}_h) \Big \|_{L^2(0,t;(H^1(\Omega))^*)} \leq K \Big[\|u-u^{\mathbf{k}}_h \|_{L^{\infty}(0,t;L^2(\Omega))}+\|v-v^{\mathbf{k}}_h \|_{L^{\infty}(0,t;L^2(\Omega))}\nonumber \\  &&\hspace{2.5in}+\|w-w^{\mathbf{k}}_h \|_{L^{\infty}(0,t;L^2(\Omega))}+\|\mathcal{R}_1 \|_{L^2(0,t;(H^1(\Omega))^*)}\nonumber \\  &&\hspace{2.5in}+\|\mathcal{R}_2 \|_{L^2(0,t;(H^1(\Omega))^*)}+\|\mathcal{R}_3 \|_{L^2(0,t;(H^1(\Omega)^*)}\Big]
\end{eqnarray}
Using \eqref{19}, \eqref{20} and \eqref{15}
\begin{eqnarray}
&&C \left(
\|u-u^{\mathbf{k}}_h \|_{\mathcal{L}(0,t)} ^2+\|v-v^{\mathbf{k}}_h \|_{L^2(0,t;H^1(\Omega))} ^2+\|w-w^{\mathbf{k}}_h \|_{\mathcal{L}(0,t)} ^2
\right)\nonumber \\&& \leq \|u_0-\Pi_h^0u_0 \|^2_{L^2(\Omega)}+\|v_0-\Pi_h^0v_0 \|^2_{L^2(\Omega)}+\|w_0-\Pi_h^0w_0 \|^2_{L^2(\Omega)}\nonumber \\&&\hspace{1in}+\|\mathcal {R}_1\|^2_{L^2(0,t;H^1(\Omega)^*)}+\|\mathcal {R}_2\|^2_{L^2(0,t;H^1(\Omega)^*)}+\|\mathcal {R}_3\|^2_{L^2(0,t;H^1(\Omega)^*)} \nonumber.
\end{eqnarray}
Hence, we proved it.
\end{proof}

\subsection{ A posteriori estimators}
We split the residual operators mainly into two operators, namely, the residual form of the space discretization \(\mathcal{R}_i^h(t)\), and temporal discretization \( \mathcal{R}_i^\tau(t)\).\\
Let $g_1(u,v)=\chi_u(v)u$ and $g_2(u,v)=\lambda u(1-u-v)$. Then for the spatial discretization,
\begin{eqnarray}
 &&\langle\mathcal{R}_1^h(t),\psi \rangle =\int_{\Omega}\frac{u^n_{h,k_n}-u^{n-1}_{h,k_{n-1}}}{\tau^n} \psi dx+\int_{\Omega}d_1(u^n_{h,k_{n}},v^n_{{h,k_{n}}},w^n_{h,k_{n}})\nabla u^n_{{h,k_{n}}}\nabla \psi\nonumber \\ &&\hspace{2cm}-\int_{\Omega}g_1(u^n_{h,k_{n}},v^n_{h,k_{n}})\nabla v^n_{h,k_{n}} \nabla \psi dx-\int_{\Omega}g_2(u^n_{h,k_{n}},v^n_{h,k_{n}}) \psi  dx.
\end{eqnarray}
For the temporal discretization, \(\mathcal{R}_1^{\tau} (t)\)
\begin{eqnarray}\label{18}
&&\langle \mathcal{R}_1^\tau (t),\psi \rangle =-\int_{\Omega}d_1(u^{\mathbf{k}}_h,v^{\mathbf{k}}_h,w^{\mathbf{k}}_h)\nabla u^{\mathbf{k}}_h\nabla \psi+\int_{\Omega}d_1(u^n_{h,k_{n}},v^n_{{h,k_{n}}},w^n_{h,k_{n}})\nabla u^n_{{h,k_{n}}}\nabla \psi dx\nonumber \\ && \hspace{1in}+\int_{\Omega}g_1(u^{\mathbf{k}}_h,v^{\mathbf{k}}_h)\nabla v^{\mathbf{k}}_h \nabla \psi-\int_{\Omega}g_1(u^n_{h,k_{n}},v^n_{h,k_{n}})\nabla v^n_{h,k_{n}} \nabla \psi\nonumber \\&&\hspace{1in}+\int_{\Omega}g_2(u^{\mathbf{k}}_h,v^{\mathbf{k}}_h) \psi  dx-\int_{\Omega}g_2(u^n_{h,k_{n}},v^n_{h,k_{n}}) \psi  dx.
\end{eqnarray}
\subsubsection{Spatial indicators}
On the interval $ (t_{n-1},t_n] $, let $ d_{1,h}\text{ and } g_{1,h} $ be the $ L^2$--projections of $ d_1\text{ and } g_1 $ onto the finite element space $ \Omega_h^n $. The element and edge residuals for $ \eqref{Model:dim}_1 $ are respectively defined as
\begin{eqnarray}
\begin{split}
	&&\rho_1^K=\frac{u_h^{n}-u^{n-1}_h}{\tau^{n}}-\nabla \cdot (d_{1,h}(u_h^{n},v_h^{n},w_h^{n}) \nabla u_h^{n})+\nabla  \cdot  (g_{1,h}(u_h^{n},v_h^{n}) \nabla v_h^{n})-g_{2}(u_h^{n},v_h^{n}),\\
	&&\rho_1^E=-\mathbb{J}_E(n_E \cdot(d_{1,h}(u_h^{n},v_h^{n},w_h^{n})\nabla u_h^n - g_{1,h}(u_h^n, v_h^n)\nabla v_h^n)) \Big|_{E \in \mathcal{E}_h^n},
 \end{split}
\end{eqnarray}
where $\mathbb{J}_E(.)$ denote the jump function across the edge $E$.
The element-wise and edge-wise data errors are, respectively
\begin{eqnarray}
&&\eta^K_1 = \nabla \cdot \Big(d_{1}(u_h^{n},v_h^{n},w_h^{n}) \nabla u_h^{n}-g_1(u_h^{n},v_h^{n}) \nabla v_h^{n}-d_{1,h}(u_h^{n},v_h^{n},w_h^{n})\nabla u_h^{n}\nonumber\\&&\hspace{7cm}+g_{1,h}(u_h^{n},v_h^{n}) \nabla v_h^{n}\Big) \\ \nonumber \\
&&\eta^E_1 = -\mathbb{J}_E(n_E \cdot(d_{1}(u_h^{n},v_h^{n},w_h^{n})\nabla u_h^n - g_{1}(u_h^n, v_h^n)\nabla v_h^n-d_{1,h}(u_h^{n},v_h^{n},w_h^{n})\nabla u_h^n \nonumber\\&&\hspace{7cm}+ g_{1,h}(u_h^n, v_h^n)\nabla v_h^n)) \Big|_{E \in \mathcal{E}_h^n}.\nonumber
\end{eqnarray}
Similarly,we define \(\rho_2^K,\rho_3^K,\rho_2^E,\rho_3^E,\eta_2^K,\eta_3^K,\eta_2^E,\eta_3^E. \)
Further, we define the spatial error indicator as \[\alpha_k^n =
\sum_{i=1}^3 \sum_{K \in \mathcal{T}_h^n} h_K^2\|\rho_i^K \|^2_{L^2(K)}+\sum_{i=1}^3 \sum_{E \in  \mathcal{E}_h^n} h_E\|\rho_i^E \|^2_{L^2(E)}
\]
and the spatial data error indicator as
\begin{equation}
\Theta_k^n =
\sum_{i=1}^3 \sum_{K \in \mathcal{T}_h^n} h_K^2\|\eta_i^K \|^2_{L^2(K)}+\sum_{i=1}^3 \sum_{E \in  \mathcal{E}_h^n} h_E\|\eta_i^E \|^2_{L^2(E)}
\label{eq:error_indicator}
\end{equation}

\subsubsection{Time indicators}
We define the indicators for temporal residuals
\begin{eqnarray}
&&P_1(t) = \|D(d_1(u^n_{h,k_n},v^n_{h,k_n},w^n_{h,k_n})\nabla u^n_{h,k_n})\|_{L^2(t_{n-1},t_n;H^1(\Omega)^*)}^2\nonumber \\
&&\hspace{1.5cm}+\|D(g_1(u^n_{h,k_n},v^n_{h,k_n})\nabla v^n_{h,k_n})\|_{L^2(t_{n-1},t_n;H^1(\Omega)^*)}^2\nonumber\\
&&\hspace{1.5cm}+\|D(g_2(u^n_{h,k_n},v^n_{h,k_n}))\|_{L^2(t_{n-1},t_n;H^1(\Omega)^*)}^2 
\end{eqnarray}
where $D$ denotes the first-order differential operator and
\begin{eqnarray}
    \kappa_k^n=\left (\|u^n_{h,k_n}-u^{n-1}_{h,k_{n-1}}\|^2+\|v^n_{h,k_n}-v^{n-1}_{h,k_{n-1}}\|^2+\|w^n_{h,k_n}-w^{n-1}_{h,k_{n-1}}\|^2\right)^2
\end{eqnarray}
Similarly, we can define the \(P_2,P_3.\) Let \(\gamma_k^n=P_1(t)+P_2(t)+P_3(t). \)
\begin{Lemma}\label{lem2}
There exists a positive constant \(C^*\) depending on the maximal ratio of the diameter of an element to the diameter of the largest ball inscribed in it and on the ratio \(\displaystyle \frac{h_{K'}}{h_K}\) such that
\begin{eqnarray}
\|\mathcal {R}_1^h\|^2_{L^2(t_{n-1},t_n;H^1(\Omega)^*))}+\|\mathcal {R}_2^h\|^2_{L^2(t_{n-1},t_n;H^1(\Omega)^*))}+\|\mathcal {R}_3^h\|^2_{L^2(t_{n-1},t_n;H^1(\Omega)^*))} \leq C^* (\alpha_k^n+\Theta_k^n),
\end{eqnarray}
in  \((t_{n-1},t_n]\) and for each \(n=1,2,3,\ldots J\).
\end{Lemma}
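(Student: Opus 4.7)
The plan is to prove the estimate one residual at a time (say for $\mathcal{R}_1^h$) and conclude by summation over the three components. The starting observation is that $\mathcal{R}_1^h$ is essentially the Galerkin residual of the fully discrete problem \eqref{d:form} tested against continuous $H^1$ functions, so it enjoys Galerkin orthogonality: for every $\psi_h\in V_h^n$ one has $\langle \mathcal{R}_1^h(t),\psi_h\rangle = 0$. Hence for any $\psi\in H^1(\Omega)$ I subtract a suitable quasi-interpolant $I_h\psi\in V_h^n$ (Cl\'ement or Scott--Zhang type) and work only with $\psi-I_h\psi$, whose $L^2$ norms on an element $K$ and an edge $E$ are controlled respectively by $h_K\|\psi\|_{H^1(\omega_K)}$ and $h_E^{1/2}\|\psi\|_{H^1(\omega_E)}$, where $\omega_K,\omega_E$ are the usual finite patches.

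Next I perform element-wise integration by parts on $\mathcal{T}_h^n$ to rewrite the diffusion and haptotaxis contributions in $\langle \mathcal{R}_1^h,\psi-I_h\psi\rangle$ as a sum of strong-form bulk residuals on each $K$ together with jump contributions across interior faces. To match the computable indicator exactly, I add and subtract $d_{1,h}$ and $g_{1,h}$ inside the integrands: this makes the projected fluxes appear in $\rho_1^K,\rho_1^E$, while the differences $d_1-d_{1,h}$ and $g_1-g_{1,h}$ are collected into the data-error terms $\eta_1^K,\eta_1^E$. Applying Cauchy--Schwarz element-by-element and edge-by-edge, then the interpolation estimates above, and finally the finite-overlap property of the patches to sum, produces the pointwise-in-$t$ bound
\begin{equation*}
|\langle \mathcal{R}_1^h(t),\psi\rangle|\;\leq\;C\,(\alpha_k^n+\Theta_k^n)^{1/2}\,\|\psi\|_{H^1(\Omega)}.
\end{equation*}
Taking the supremum over $\psi$ with $\|\psi\|_{H^1(\Omega)}=1$ and then integrating over $(t_{n-1},t_n]$ yields the required dual-norm estimate for $\mathcal{R}_1^h$. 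The treatments of $\mathcal{R}_2^h$ (pure reaction, no spatial divergence) and $\mathcal{R}_3^h$ (constant-coefficient Laplacian plus linear reaction) are entirely analogous, and in fact simpler, since fewer nonlinear coefficients must be split off via the $d_{1,h},g_{1,h}$ trick.

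The main obstacle I anticipate is the mesh bookkeeping: the indicator $\alpha_k^n+\Theta_k^n$ is assembled over the tessellation $\mathcal{T}_h^n$, whereas the piecewise-linear-in-time interpolant $(u_h^{\mathbf{k}},v_h^{\mathbf{k}},w_h^{\mathbf{k}})$ and its spatial fluxes naturally live on the common refinement $\tilde{\mathcal{T}}_h^n$ of $\mathcal{T}_h^n$ and $\mathcal{T}_h^{n+1}$. When bounding integrals on $K\in\tilde{\mathcal{T}}_h^n$ in terms of the element sizes $h_{K'}$ of the parent $K'\in\mathcal{T}_h^n$ with $K\subseteq K'$, one must pass between $h_K$ and $h_{K'}$; this is precisely where the stated dependence of $C^*$ on the ratio $h_{K'}/h_K$ and on the shape-regularity constant enters. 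A secondary, routine subtlety is that the jumps $\mathbb{J}_E$ of the discretised flux $g_{1,h}(u_h^n,v_h^n)\nabla v_h^n$ must be meaningful across each $E\in\mathcal{E}_h^n$, which is guaranteed because $d_{1,h}$ and $g_{1,h}$ are $L^2$-projections onto the finite element space and hence piecewise polynomial on $\mathcal{T}_h^n$.
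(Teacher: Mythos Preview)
Your proposal is correct and follows essentially the same route as the paper: Galerkin orthogonality to replace $\psi$ by $\psi-I_h\psi$, element-wise integration by parts on the refined mesh $\tilde{\mathcal T}_h^n$ to produce the bulk residuals $\rho_i^K$ and edge jumps $\rho_i^E$ together with the data-oscillation terms $\eta_i^K,\eta_i^E$, then Cl\'ement-type interpolation estimates and a trace inequality, Cauchy--Schwarz for sums, and shape regularity to recombine everything into $\alpha_k^n+\Theta_k^n$. Your anticipation of the $h_{K'}/h_K$ bookkeeping when passing between $\tilde{\mathcal T}_h^n$ and $\mathcal T_h^n$ is exactly how the constant $C^*$ arises in the paper as well.
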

\begin{proof}

For \(t\in (t_{n-1},t_n]\),  $L^2$-representation of the residual yields
\begin{eqnarray}
&&\langle \mathcal{R}_1^h,\psi_1 \rangle=\sum_{K\in \tilde{\mathcal{T}}^n_h}\int_{K}\frac{u^n_{h,k_n}-u^{n-1}_{h,k_{n-1}}}{\tau^n} \psi_1 dx+\sum_{K\in \tilde{\mathcal{T}}^n_h}\int_{K}d_1(u^n_{h,k_{n}},v^n_{{h,k_{n}}},w^n_{h,k_{n}})\nabla u^n_{{h,k_{n}}}\nabla \psi_1\nonumber \\ &&\hspace{2cm}-\sum_{K\in \tilde{\mathcal{T}}^n_h}\int_{K}g_1(u^n_{h,k_{n}},v^n_{h,k_{n}})\nabla v^n_{h,k_{n}} \nabla \psi_1 dx-\sum_{K\in \tilde{\mathcal{T}}^n_h}\int_{K}g_2(u^n_{h,k_{n}},v^n_{h,k_{n}}) \psi_1  dx\nonumber\\
&&\hspace{1.35cm}=\sum_{K\in \tilde{\mathcal{T}}^n_h}\int_{K}\frac{u^n_{h,k_n}-u^{n-1}_{h,k_{n-1}}}{\tau^n} \psi_1 dx-\int_{K}\nabla\cdot(d_1(u^n_{h,k_{n}},v^n_{{h,k_{n}}},w^n_{h,k_{n}})\nabla u^n_{{h,k_{n}}})\psi_1\nonumber\\&&\hspace{2cm}+\sum_{K\in \tilde{\mathcal{T}}^n_h}\int_{\partial K}n_{K}\cdot d_1(u^n_{h,k_{n}},v^n_{{h,k_{n}}},w^n_{h,k_{n}})\nabla u^n_{{h,k_{n}}}\psi_1\nonumber\\
&&\hspace{2cm}+\sum_{K\in \tilde{\mathcal{T}}^n_h}\int_{K}\nabla\cdot\left(g_1(u^n_{h,k_{n}},v^n_{h,k_{n}})\nabla v^n_{h,k_{n}}\right)\psi_1 dx\nonumber\\
&&\hspace{2cm}-\sum_{K\in \tilde{\mathcal{T}}^n_h}\int_{\partial K}n_{K}\cdot g_1(u^n_{h,k_{n}},v^n_{h,k_{n}})\nabla v^n_{h,k_{n}}\psi_1 dx\nonumber\\
&&\hspace{2cm}-\sum_{K\in \tilde{\mathcal{T}}^n_h}\int_{K}g_2(u^n_{h,k_{n}},v^n_{h,k_{n}}) \psi_1  dx\nonumber\\
&&\hspace{1.35cm}=\sum_{K\in \tilde{\mathcal{T}}^n_h}\int_{K} \rho_1^K \psi_1 dx+\sum_{E\in\tilde{\mathcal{E}}_h^n}\int_{E}\rho_1^E\psi_1 dx\nonumber\\
&&\hspace{1.8cm}+\sum_{K\in \tilde{\mathcal{T}}^n_h}\int_{K} \eta_1^K \psi_1 dx+\sum_{E\in \tilde{\mathcal{E}}_h^n}\int_{E}\eta_1^E\psi_1 dx
\end{eqnarray}
Let $ I_h\psi_1 $ be a quasi-interpolation \cite{Verfuerth13} of $ \psi_1 $. Using the orthogonality property, we get
\begin{eqnarray}
&&|\langle \mathcal{R}_1^h,\psi_1 \rangle |\leq |\langle \mathcal{R}_1^h,I_h\psi_1 \rangle+\langle \mathcal{R}_1^h,(\psi_1-I_h\psi_1) \rangle| \nonumber\\
&& \hspace{1.8cm}\leq \sum_{K\in \tilde{\mathcal{T}}^n_h}\int_{K} \Big|\rho_1^K (\psi_1-I_h\psi_1) \Big|dx+\sum_{E\in \tilde{\mathcal{E}}_h^n}\int_{E}\Big|\rho_1^E(\psi_1-I_h\psi_1)\Big|dx\nonumber\\
&&\hspace{2.2cm}+ \sum_{K\in \tilde{\mathcal{T}}^n_h}\int_{K} \Big|\eta_1^K (\psi_1-I_h\psi_1) \Big|dx+\sum_{E\in \tilde{\mathcal{E}}_h^n}\int_{E}\Big|\eta_1^E(\psi_1-I_h\psi_1)\Big|dx  
\end{eqnarray}
By interpolation estimates for \(I_h\) and a standard trace theorem \cite{Verfuerth13}, we get
\begin{eqnarray}
&&|\langle \mathcal{R}_1^h,\psi_1 \rangle | \leq C_1\Big(\sum_{K\in \tilde{\mathcal{T}}^n_h} h_K\|\rho^K_1 \|_{L^2(K)}\|\nabla \psi_1 \|_{L^2(\omega_K)}+\sum_{E\in \tilde{\mathcal{E}}_h^n} h^\frac{1}{2}_E\|\rho^E_1 \|_{L^2(E)}\|\nabla \psi_1 \|_{L^2(\omega_E)}\nonumber\\
&&\hspace{2cm}+\sum_{K\in \tilde{\mathcal{T}}^n_h} h_K\|\eta^K_1 \|_{L^2(K)}\|\nabla \psi_1 \|_{L^2(\omega_K)}+\sum_{E\in \tilde{\mathcal{E}}_h^n} h^\frac{1}{2}_E\|\eta^E_1 \|_{L^2(E)}\|\nabla \psi_1 \|_{L^2(\omega_E)}\Big) 
\end{eqnarray}
Then by Cauchy-Schwartz inequality for sums and shape regularity of $\mathcal{T}$,
\begin{eqnarray}
&&\|\mathcal{R}_1^h \|_{*}\leq C_1 \Bigg( \sum_{K\in \tilde{\mathcal{T}}^n_h} h_K^2\|\rho^K_1 \|_{L^2(K)}^2+\sum_{E\in \tilde{\mathcal{E}}_h^n} h_E\|\rho^E_1 \|_{L^2(E)} ^2\nonumber\\&&\hspace{2.8cm}+ \sum_{K\in \tilde{\mathcal{T}}^n_h} h_K^2\|\eta^K_1 \|_{L^2(K)}^2+\sum_{E\in \tilde{\mathcal{E}}_h^n} h_E\|\eta^E_1 \|_{L^2(E)} ^2 \Bigg)^\frac{1}{2}. \label{21}
\end{eqnarray}
Similarly for $i=2,3$, we get
\begin{eqnarray}
&&\|\mathcal{R}_i^h \|_{*}\leq C_i \Big( \sum_{K\in \tilde{\mathcal{T}}^n_h} h_K^2\|\rho^k_i \|_{L^2(K)}^2+\sum_{E\in \tilde{\mathcal{E}}_h^n} h_E\|\rho^E_i \|_{L^2(E)} ^2\nonumber\\&&\hspace{2.5cm}+ \sum_{K\in \tilde{\mathcal{T}}^n_h} h_K^2\|\eta^k_i \|_{L^2(K)}^2+\sum_{E\in \tilde{\mathcal{E}}_h^n} h_E\|\eta^E_i \|_{L^2(E)} ^2\Big)^\frac{1}{2}. \label{22}
\end{eqnarray}
Here all the constants \( C_i, ~i=1,2,3\) depend on the ratio \(\frac{h_{K'}}{h_{K}} \). Squaring and adding \eqref{21} and \eqref{22}, and integrating over \((t_{n-1},t_n) \), it proves the Lemma \ref{lem2}.
\end{proof}

\begin{Lemma}\label{4}
	There exists a positive constant \(C^\dagger\)  such that
\begin{eqnarray}
\sum_{i=1}^3\|\mathcal{R}_i^\tau \|_{L^2(t_{n-1},t_n;H^1(\Omega)^*))}^2	 \leq  C^\dagger (\gamma_k^n+\kappa_k^n)
\end{eqnarray}
in the interval \((t_{n-1},t_n] \) and for each \(n=1,2,3,\ldots, J\).
\end{Lemma}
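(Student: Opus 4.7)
The plan is to bound each temporal residual $\mathcal{R}_i^\tau$ on a slab $(t_{n-1},t_n]$ by exploiting the piecewise-linear (in time) structure of the interpolates $u_h^{\mathbf{k}}, v_h^{\mathbf{k}}, w_h^{\mathbf{k}}$, the Lipschitz hypotheses (H2)--(H4), and the interpolation identity
\begin{equation*}
z_h^{\mathbf{k}}(t) - z^n_{h,k_n} = \frac{t - t_n}{\tau_n}\bigl(z^n_{h,k_n} - z^{n-1}_{h,k_{n-1}}\bigr),\qquad t\in(t_{n-1},t_n],\ z\in\{u,v,w\}.
\end{equation*}
This identity is the main tool: the deviation of each interpolate from its endpoint value is a linear-in-$t$ multiple of a node-to-node jump, which is exactly what $\kappa_k^n$ measures after squaring and integrating $((t_n-t)/\tau_n)^2$ over the slab to produce a factor $\tau_n/3$.

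Starting from definition \eqref{18}, I would rewrite each of the three differences appearing in $\langle\mathcal{R}_1^\tau,\psi\rangle$ (and analogously for $\mathcal{R}_2^\tau,\mathcal{R}_3^\tau$) as a telescoping chain that freezes one argument at a time, in the same spirit as the rewriting performed in the proof of Lemma~\ref{lem1}. Each link is of one of two forms: either a Lipschitz-coefficient difference such as $\bigl(d_1(u_h^{\mathbf{k}},v_h^{\mathbf{k}},w_h^{\mathbf{k}}) - d_1(u^n_{h,k_n},v^n_{h,k_n},w^n_{h,k_n})\bigr)\nabla u_h^{\mathbf{k}}$, controlled by (H2) (and its $g_1,g_2$ analogues by (H4) and the polynomial Lipschitz-ness of $g_1,g_2$ on the admissible range), or a pure gradient difference $d_1(u^n_{h,k_n},\ldots)\nabla(u_h^{\mathbf{k}} - u^n_{h,k_n})$, controlled by (H1). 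Invoking the interpolation identity turns both types into $(t_n-t)/\tau_n$ multiplied by a norm of $u^n-u^{n-1}$, $v^n-v^{n-1}$, or $w^n-w^{n-1}$; testing against $\psi\in H^1(\Omega)$ with $\|\nabla\psi\|_{L^2}\le\|\psi\|_{H^1}$ and applying Cauchy--Schwarz in $x$ gives a pointwise-in-$t$ bound of the form $\|\mathcal{R}_i^\tau(t)\|_{*}\lesssim \frac{t_n-t}{\tau_n}(\text{sum of jump norms})$. Squaring, integrating in $t$ over $(t_{n-1},t_n)$, and summing over $i=1,2,3$ produces the $C^\dagger\kappa_k^n$ contribution.

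The remaining pieces are those whose Lipschitz factor is coupled to a differential operator on the frozen-in-time nonlinear flux itself, namely variations of $d_1(u^n,v^n,w^n)\nabla u^n$, $g_1(u^n,v^n)\nabla v^n$, and $g_2(u^n,v^n)$ that cannot be expressed purely as node-to-node jumps; these are absorbed directly into the first-order norms $P_1,P_2,P_3$, producing the $\gamma_k^n$ contribution by definition. Setting $C^\dagger$ to depend on $m,M$, the Lipschitz constants $d,L$ of (H2) and (H4), the uniform bounds from (H3), and the reaction parameters $\lambda,\rho,\eta,\alpha,\beta$ then finishes the proof. The main obstacle will be handling the gradient factor $\nabla u_h^{\mathbf{k}}$ sitting inside the coefficient-difference terms: one has only $L^2(t_{n-1},t_n;H^1(\Omega))$ control on the interpolates a priori, so a uniform-in-$t$ factorization must either pass through a Hölder-in-time step (absorbing $\tau_n^{-1/2}$ into $P_i$) or use inverse estimates on $V_h$ at each time node; either route is technical but routine, and the resulting constants can be folded into $C^\dagger$ without affecting the structure of the bound.
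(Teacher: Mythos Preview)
Your telescoping-plus-Lipschitz plan is not the route the paper takes, and as written it does not land on the correct indicators. The paper introduces the nonlinear operator $G_1$ with $\langle G_1(u,v,w),\psi\rangle=\int_\Omega d_1\nabla u\nabla\psi-\int_\Omega g_1\nabla v\nabla\psi-\int_\Omega g_2\psi$, writes $\mathcal{R}_1^\tau=G_1(u^n_{h,k_n},v^n_{h,k_n},w^n_{h,k_n})-G_1(u_h^{\mathbf{k}},v_h^{\mathbf{k}},w_h^{\mathbf{k}})$, and then performs a first-order Taylor (Fr\'echet-derivative) expansion along the segment joining the two arguments. This produces a clean split $\mathcal{R}_1^\tau=\mathcal{R}_{1,1}^\tau+\mathcal{R}_{1,2}^\tau$: the linear part $\mathcal{R}_{1,1}^\tau$ equals $\bigl(1-\tfrac{t-t_{n-1}}{\tau_n}\bigr)r^n$ with $r^n=DG_1(u^n_{h,k_n},\ldots)[\,u^n-u^{n-1},\,v^n-v^{n-1},\,w^n-w^{n-1}\,]$, and by definition $\|r^n\|_*^2$ \emph{is} the quantity $P_1$, so one gets $\|\mathcal{R}_{1,1}^\tau\|^2_{L^2(t_{n-1},t_n;(H^1)^*)}\le(\tau_n/3)P_1$ immediately. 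The remainder $\mathcal{R}_{1,2}^\tau$ is then bounded using local Lipschitz continuity of $DG_1$, which yields a factor $\|u_h^{\mathbf{k}}-u^n\|^2+\cdots$, hence after the interpolation identity and squaring a \emph{fourth}-order jump quantity---exactly $\kappa_k^n$.

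The gap in your proposal is the order count. Your direct telescoping with (H1)--(H4) gives a pointwise bound $\|\mathcal{R}_i^\tau(t)\|_*\lesssim\frac{t_n-t}{\tau_n}\bigl(\|u^n-u^{n-1}\|+\cdots\bigr)$, which after squaring and integrating is \emph{quadratic} in the jumps, not quartic; this is not $\kappa_k^n=\bigl(\|u^n-u^{n-1}\|^2+\cdots\bigr)^2$. Meanwhile your description of the $\gamma_k^n$ contribution (``variations \ldots\ that cannot be expressed purely as node-to-node jumps'') does not match the paper's definition: $P_i$ is not a catch-all for leftover flux terms but precisely the dual norm of the linearisation $DG_i$ applied to the jump vector. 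In short, the missing idea is the Fr\'echet-derivative split of $G_i$; without it the two indicators $\gamma_k^n$ and $\kappa_k^n$ do not emerge from your estimates with the right homogeneities, and the ``remaining pieces'' step has no concrete content.
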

\begin{proof}
We define $G_1 : \left(L^2(0,T;H^1(\Omega))\right)^3 \rightarrow \left(L^2(0,T;H^1(\Omega)^*)\right)^3$ as,
\begin{eqnarray}
  &&\langle G_1(u,v,w), \psi_1 \rangle = \int_{\Omega}d_1(u,v,w)\nabla u\nabla \psi_1-\int_{\Omega}g_1(u,v)\nabla v \nabla \psi_1 dx\nonumber \\ &&\hspace{4cm}-\int_{\Omega}g_2(u,v) \psi_1  dx, \hspace{1cm} \forall \psi_1 \in H^1(\Omega).
\end{eqnarray}
Then, by the definition of the temporal residual \eqref{15},
\begin{align*}
   \mathcal{R}_1^\tau (t)&=G_1(u^n_{h,k_{n}},v^n_{{h,k_{n}}},w^n_{h,k_{n}})-G_1(u^{\mathbf{k}}_h,v^n_{{h,\tau}},w^{\mathbf{k}}_h) \\
    &= \int_0^1 DG_1(u^{\mathbf{k}}_h+s(u^n_{h,k_{n}}-u^{\mathbf{k}}_h),v^{\mathbf{k}}_h+s(v^n_{{h,k_{n}}}-v^{\mathbf{k}}_h),w^{\mathbf{k}}_h+s(w^n_{{h,k_{n}}}-w^{\mathbf{k}}_h))\cdot \\ 
    & \hspace{1.2cm}(u^n_{h,k_{n}}-u^{\mathbf{k}}_h,v^n_{{h,k_{n}}}-v^{\mathbf{k}}_h,w^n_{{h,k_{n}}}-w^{\mathbf{k}}_h)ds\\
    &= DG_1(u^n_{h,k_{n}},v^n_{{h,k_{n}}},w^n_{h,k_{n}})(u^n_{h,k_{n}}-u^{\mathbf{k}}_h,v^n_{{h,k_{n}}}-v^{\mathbf{k}}_h,w^n_{{h,k_{n}}}-w^{\mathbf{k}}_h)\\
    & \hspace{0.3cm}+\int_0^1 [(DG_1(u^{\mathbf{k}}_h+s(u^n_{h,k_{n}}-u^{\mathbf{k}}_h),v^{\mathbf{k}}_h+s(v^n_{{h,k_{n}}}-v^{\mathbf{k}}_h),w^{\mathbf{k}}_h+s(w^n_{{h,k_{n}}}-w^{\mathbf{k}}_h))\\
    & \hspace{1.6cm}-DG_1(u^n_{h,k_{n}},v^n_{{h,k_{n}}},w^n_{h,k_{n}}))\cdot(u^n_{h,k_{n}}-u^{\mathbf{k}}_h,v^n_{{h,k_{n}}}-v^{\mathbf{k}}_h,w^n_{{h,k_{n}}}-w^{\mathbf{k}}_h)]ds 
    \\&= \mathcal{R}_{1,1}^\tau (t) + \mathcal{R}_{1,2}^\tau (t)
\end{align*}
We define 
\begin{eqnarray}
    &&\langle r^n, \psi_1 \rangle =\int_{\Omega}\Big[d_1^u(u^{n}_{h,k_{n}},v^{n}_{{h,k_{n}}},w^{n}_{h,k_{n}})\nabla u^{n}_{h,k_{n}} \nabla \psi_1(u^{n}_{h,k_{n}}-u^{n-1}_{h,k_{n-1}})\nonumber \\
    &&\hspace{2cm}+ d_1^v(u^n_{h,k_{n}},v^n_{{h,k_{n}}},w^n_{h,k_{n}})\nabla u^n_{{h,k_{n}}}\nabla \psi_1(v^{n}_{h,k_{n}}-v^{n-1}_{h,k_{n-1}})\nonumber \\
    && \hspace{2cm} +d_1^w(u^n_{h,k_{n}},v^n_{{h,k_{n}}},w^n_{h,k_{n}})\nabla u^n_{{h,k_{n}}}\nabla \psi_1(w^{n}_{h,k_{n}}-w^{n-1}_{h,k_{n-1}}) \nonumber \\ 
    &&\hspace{2cm}+d_1(u^{n}_{h,k_{n}},v^{n}_{{h,k_{n}}},w^{n}_{h,k_{n}})\nabla (u^{n}_{h,k_{n}}-u^{n-1}_{h,k_{n-1}}) \nabla \psi_1\nonumber \\
    && \hspace{2cm}-g_1^u(u^{n}_{h,k_{n}},v^{n}_{h,k_{n}})\nabla v^{n}_{h,k_{n}} \nabla \psi_1(u^{n}_{h,k_{n}}-u^{n-1}_{h,k_{n-1}})\nonumber \\&&\hspace{2cm}-g_1^v(u^n_{h,k_{n}},v^n_{h,k_{n}})\nabla v^n_{h,k_{n}} \nabla \psi_1(v^{n}_{h,k_{n}}-v^{n-1}_{h,k_{n-1}})\nonumber \\
    && \hspace{2cm}-g_1(u^n_{h,k_{n}},v^n_{h,k_{n}})\nabla (v^n_{h,k_{n}}-v^{n-1}_{h,k_{n-1}}) \nabla \psi_1\nonumber \\&& \hspace{2cm} -g_2^u(u^{n}_{h,k_{n}},v^{n}_{h,k_{n}}) \psi_1(u^{n}_{h,k_{n}}-u^{n-1}_{h,k_{n-1}}) \nonumber \\
    && \hspace{2cm}-g_2^v(u^n_{h,k_{n}},v^n_{h,k_{n}}) \psi_1(v^{n}_{h,k_{n}}-v^{n-1}_{h,k_{n-1}}) \Big]dx.
\end{eqnarray}
Then, on $ (t_{n-1},t_n] $ we have
\begin{eqnarray}
    \langle \mathcal{R}_{1,1}^{\tau}(t) , \psi_1 \rangle = \left( 1-\dfrac{t-t_{n-1}}{\tau_n}\right)\langle r^n, \psi_1 \rangle  
\end{eqnarray}
By the Lemma 6.47 in \cite{Verfuerth13}, we get 
\begin{eqnarray}
    &&\|\mathcal{R}_{1,1}^\tau  \|^2_{L^2(t_{n-1},t_n;H^1(\Omega)^*)} \leq \frac{\tau_n}{3}(\|r^n\|^2)\nonumber\\
    &&\hspace{3.4cm}=\frac{\tau_n}{3} P_1(t)
\end{eqnarray}
Now, with conditions \textbf{H1}--\textbf{H4} and assuming that $ DG_1 $ is Locally Lipschitz continuous at the solution of \eqref{Model:dim}, we get
\begin{eqnarray}
   && \|\mathcal{R}_{1,2}^\tau (t)\|_{L^2(t_{n-1},t_n;H^1(\Omega)^*)} \leq L\left(\|u^{\mathbf{k}}_h-u^n_{h,k_{n}}\|^2+\|v^{\mathbf{k}}_h-v^n_{h,k_{n}}\|^2+\|w^{\mathbf{k}}_h-w^n_{h,k_{n}}\|^2\right)\nonumber\\
   &&\hspace{2.5cm}\leq L\tau_n\left(\|u^n_{h,k_n}-u^{n-1}_{h,k_{n-1}}\|^2+\|v^n_{h,k_n}-v^{n-1}_{h,k_{n-1}}\|^2+\|w^n_{h,k_n}-w^{n-1}_{h,k_{n-1}}\|^2\right)\nonumber\\
   &&\|\mathcal{R}_{1,2}^\tau (t)\|^2_{L^2(t_{n-1},t_n;H^1(\Omega)^*)}\leq L^* \kappa_k^n
\end{eqnarray}
Similarly, we do the calculations for \(\mathcal{R}_2^{\tau} \) and \(\mathcal{R}_3^{\tau}\) and adding the results, we get
\begin{eqnarray}
	\sum_{i=1}^3\|\mathcal{R}_i^\tau \|^2_{L^2(t_{n-1},t_n;H^1(\Omega)^*)}	 \leq  C^\dagger (\gamma_k^n+\kappa_k^n)
\end{eqnarray}
\end{proof}
\begin{Th}
For the solution given by \((u_{h,k_n}^n,v_{h,k_n}^n,w_{h,k_n}^n) \) , \( n=1,2,\ldots, N\), \(k=1,2,3\ldots,k_n\) , it holds
\begin{eqnarray}
 &&\| u-u_{h,\tau}^k\|_{\mathcal{L}(0,t_n)}^2+\| v-v_{h,\tau}^k\|_{L^2(0,t_n;H^1(\Omega))}^2+\| w-w_{h,\tau}^k\|_{\mathcal{L}(0,t_n)}^2 \nonumber\\&&\hspace{1cm}\leq C \left( \|u_0-u_{h,0}\|^2_{L^2(\Omega)} +\|v_0-v_{h,0}\|^2_{L^2(\Omega)}\right. + \|w_0-w_{h,0}\|^2_{L^2(\Omega)} \nonumber \\ &&\hspace{4cm}\left. +\alpha_K+\Theta_k+\gamma_k+\kappa_k  \right),
 \end{eqnarray} 
where \(C\) is the positive constant which depends on the maximal ratio of the diameter of an element to the diameter of the largest  ball inscribed  in it and \( \frac{h_{K'}}{h_K}. \)
\end{Th}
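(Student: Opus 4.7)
The plan is to assemble the three preceding lemmas into the global estimate. Lemma \ref{lem1} already reduces the task of bounding the error in the combined norm on the left-hand side to controlling the three quantities $\|\mathcal{R}_i\|_{L^2(0,t_n;H^1(\Omega)^*)}^2$ (plus the initial data terms, which appear verbatim in the target inequality). So the entire remaining work is to bound these dual norms of the residuals in terms of the computable indicators $\alpha_k$, $\Theta_k$, $\gamma_k$, $\kappa_k$. First I would invoke Lemma \ref{lem1} on the interval $(0,t_n]$, absorbing the constant $C$ in front into the right-hand side constant of the theorem.

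Next, I would exploit the splitting $\mathcal{R}_i = \mathcal{R}_i^h + \mathcal{R}_i^\tau$ that was introduced at the start of Section 3.2. By the triangle inequality in $L^2(t_{n-1},t_n;H^1(\Omega)^*)$ and the elementary bound $(a+b)^2 \leq 2a^2+2b^2$, on each subinterval one has
\begin{equation*}
\|\mathcal{R}_i\|_{L^2(t_{n-1},t_n;H^1(\Omega)^*)}^2 \leq 2\|\mathcal{R}_i^h\|_{L^2(t_{n-1},t_n;H^1(\Omega)^*)}^2 + 2\|\mathcal{R}_i^\tau\|_{L^2(t_{n-1},t_n;H^1(\Omega)^*)}^2.
\end{equation*}
Summing the three indices $i=1,2,3$ and applying Lemma \ref{lem2} to the first group and Lemma \ref{4} to the second yields
\begin{equation*}
\sum_{i=1}^3 \|\mathcal{R}_i\|_{L^2(t_{n-1},t_n;H^1(\Omega)^*)}^2 \leq 2C^*(\alpha_k^n+\Theta_k^n)+2C^\dagger(\gamma_k^n+\kappa_k^n).
\end{equation*}

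Finally, I would sum over $n=1,2,\ldots,N$. Since the $L^2(0,t_n;H^1(\Omega)^*)$-norm decomposes additively over the partition $\{(t_{n-1},t_n]\}$, the summed indicators collapse to their global counterparts $\alpha_k = \sum_n \alpha_k^n$, $\Theta_k = \sum_n \Theta_k^n$, $\gamma_k = \sum_n \gamma_k^n$, $\kappa_k = \sum_n \kappa_k^n$. Substituting back into the estimate provided by Lemma \ref{lem1}, and setting $C$ to be the maximum of all the accumulated multiplicative constants (which inherit the dependence on shape regularity and on $h_{K'}/h_K$ from Lemma \ref{lem2}), gives exactly the desired bound.

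No step here is especially delicate once the three lemmas are in hand; the only mild subtlety is to make sure that the initial data approximation terms do not pick up an extra Gronwall factor $e^{Ct_n}$ larger than the constant $C$ of the theorem, which is handled by absorbing $e^{CT}$ into the generic constant since $t_n\le T$. The interpolation $u_h^{\mathbf{k}} = u_{h,\tau}^k$ used on the left-hand side is exactly the piecewise-linear-in-time reconstruction introduced at the beginning of Section 3.1, so no additional identification is needed.
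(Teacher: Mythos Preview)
Your proposal is correct and follows exactly the same approach as the paper: the authors' proof simply notes the splitting $\mathcal{R}_i(t)=\mathcal{R}_i^h+\mathcal{R}_i^\tau(t)$ and states that the result follows from Lemmas \ref{lem1}--\ref{4}. You have merely spelled out the details (triangle inequality, summation over subintervals, absorption of constants) that the paper leaves implicit.
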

\begin{proof}
 By the definition of the residuals,
	\begin{equation*}
     \mathcal{R}_i(t)=\mathcal{R}_i^h+\mathcal{R}_i^\tau(t) \mbox{ for }i=1,2,3.
	\end{equation*}
	The proof follows from Lemma \ref{lem1}-\ref{4}. 
\end{proof}

\section{Numerical Experiments}
\label{sec:num-res}
This section presents a series of numerical experiments with the
adaptive mesh refinement strategy based on the residual error estimators that were analyzed in section \ref{sec:error_est}. Then, we compute its convergence order when $h$ diminishes. We used piecewise linear finite elements
(i.e., polynomials of degree one) in all the experiments. The numerical simulations are performed in a 3D spatial domain that has the size of 
$\Omega = [0,1] \times [0,1]\times [0,1]$.
The presented numerical results are done on our local desktop, which consists of an Intel Core-i9 processor clocked at 2.80 GHz and equipped with 32 GB RAM.
In this regard, we consider our first test example based on \cite{ganesan2017biophysical}, which consists of the following model parameters (set-1):
\begin{equation}\label{problem1}
    c = d_1 = 0.0001,~ d_2 = 0.0005, ~\chi = 0.005, ~ \lambda = 0.75, ~ \eta = 10,~ \rho = 1.5, ~ \beta = 0.1,~ \alpha = 0.25. \nonumber
\end{equation}
In our computational domain, we choose the origin as the starting point for solid tumor formation and expand its size based on the time period. The initial and boundary conditions (Eqs. \eqref{boundary_conditions}-\eqref{initial_conditions})  are applied to a coarse grid with $20\times 20\times 20$ mesh elements which consist of 48000 tetrahedrons and 9261 nodes. The Adaptive Mesh Refinement (AMR) method relies on a hierarchical structure of nested mesh levels and is a recognized strategy for effectively managing mesh resolutions to ensure reliability. Its key function involves post-processing a computed numerical solution at a particular time step within a computational domain to determine areas where the current computational mesh needs refinement or coarsening. This results in the creation of a dynamic mesh that adeptly captures the numerical solution with both efficiency and precision. In our simulations, we employ the AMR approach guided by the error indicator outlined in Eq.~\eqref{eq:error_indicator}, and the spatial relative tolerance is set to $TOL_x = 0.001$, see \cite{Aswin_JCAM23} for the detailed algorithm of the spatial grid adaptivity.

First, we illustrate the $L_2$-norm of the error between uniform and adaptive refinements in Figure~\ref{fig:uniform-adapt}.  We allow the minimal size of an element in the adaptive grid refinement up to 0.0108253. The coarse grid is refined uniformly seven times to get the model problem's reference solution due to the missing exact solution. In our computations, this fine mesh comprises 6,144,000
tetrahedral elements and 1,043,441 nodes.
As expected, the convergence rate is far better in the case of adaptive mesh refinement than with uniform refinement w.r.t the number of degrees of freedom.

\begin{figure}
\begin{center}
 \includegraphics[scale=0.25]{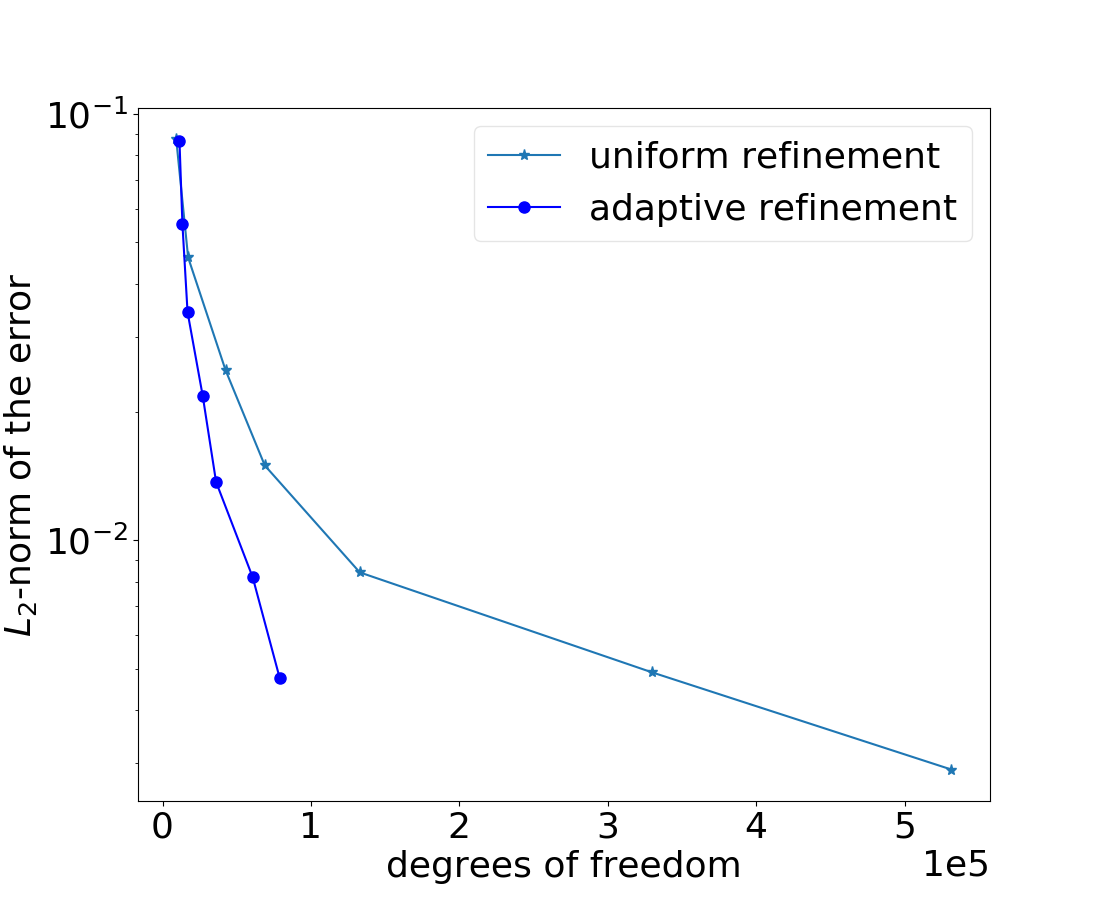}~
\end{center}
 \caption{$L_2$-norm of the error(log scale) w.r.t the degrees of freedom for the uniform and adaptive mesh refinement strategies using the parameter set 1.}
 \label{fig:uniform-adapt}
\end{figure}

The solution  of cancer density and their corresponding computational mesh obtained using adaptive grid refinement is depicted in Figure~\ref{fig:surf-mesh-sol}. We observe that the  error is concentrated at the solution boundaries in the computational domain's bottom corners, and the error indicator suggests keeping more mesh points in this region. More information on the numerical solutions to the current problem can be referred to \cite{Aswin_JCAM23}.

\begin{figure}[!h]
\centering
\includegraphics[width=0.31\textwidth] {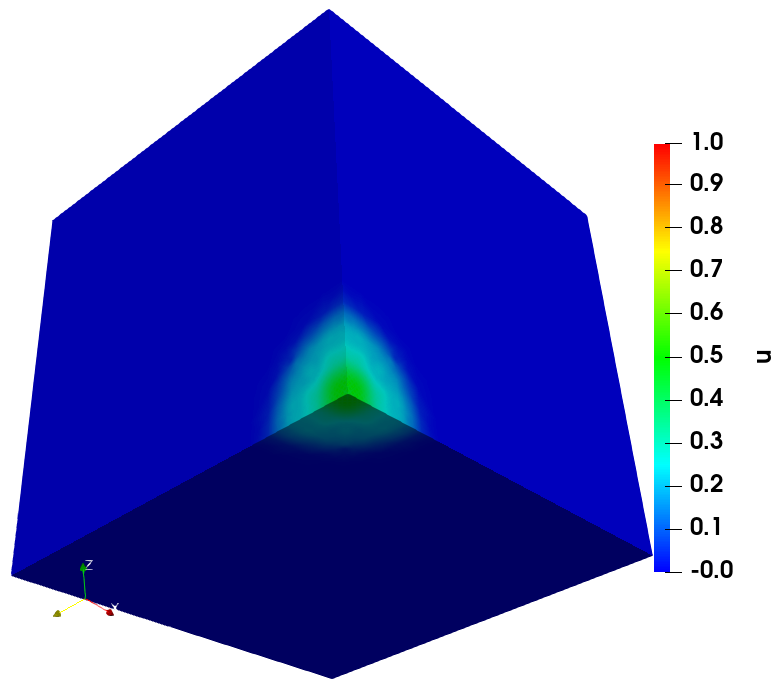}
\includegraphics[width=0.31\textwidth] {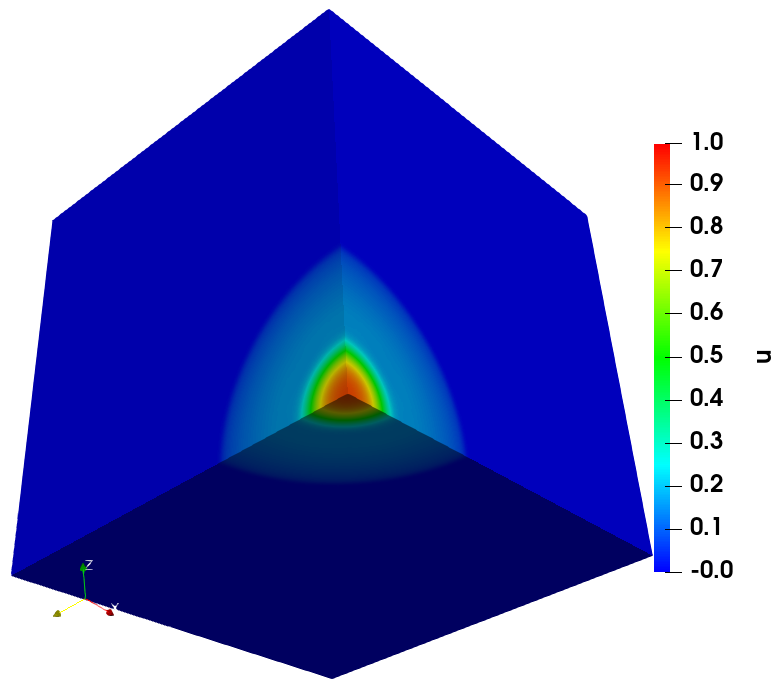}
\includegraphics[width=0.31\textwidth] {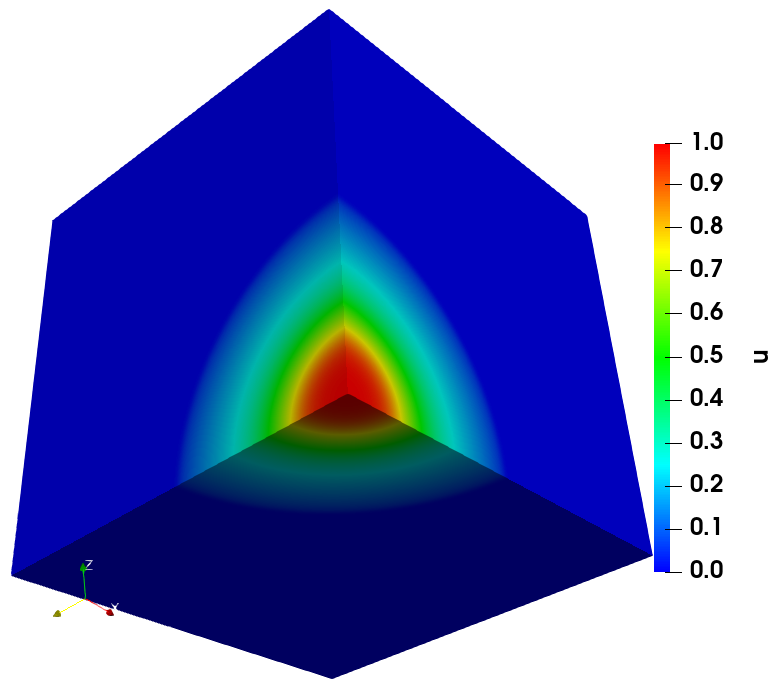}\hspace*{0.2em}

\includegraphics[width=0.27\textwidth] {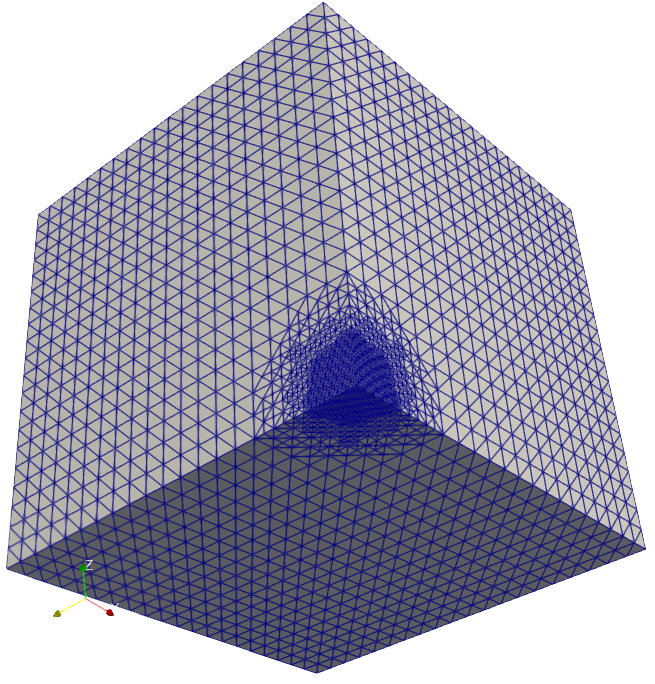}\hspace*{1.2em}
\includegraphics[width=0.27\textwidth] {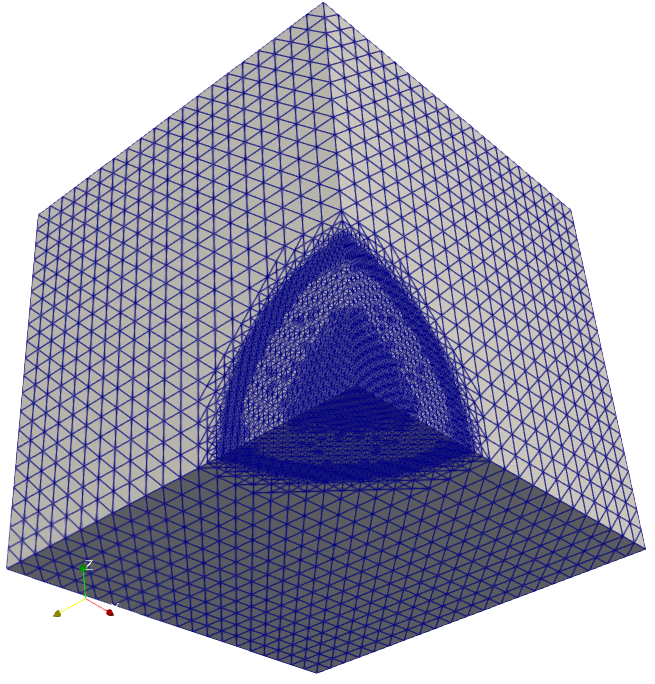}\hspace*{1.2em}
\includegraphics[width=0.27\textwidth] {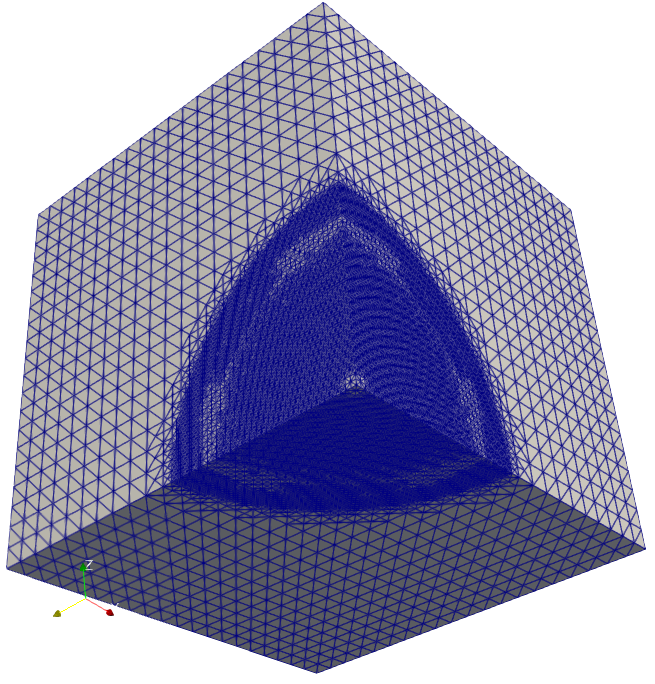}
\caption{The evolution of cancer density (at top row) and the corresponding computational mesh at different times $t = 1, 5,$ and $10$.}
\label{fig:surf-mesh-sol}
\end{figure}

\begin{table}
\begin{center}
 \begin{tabular}{|p{2.0cm}|p{2.0cm}|p{2.0cm}|} \hline 
  No. of mesh points & $L_2$ error & CPU Time (seconds) \\  \hline
11,439 &  0.0863209  & 593.5\\
13,413 &  0.0552073  & 813.7\\
16,874 &  0.0343122  & 941.5 \\
27,104 &  0.0218120  & 1302.2\\
36,107 &  0.0137021  & 1508.7 \\
60,770 &  0.0081902  & 3010.6 \\
78,945 &  0.0047541  & 3977.5 \\ \hline
 \end{tabular}
\end{center}

 \caption{$L_2$ error estimates and the corresponding CPU times at different adaptive mesh levels for the first parameter set.}
\label{table:adapt-information-1}
\end{table}

The computational times of the above simulation are given in Table~\ref{table:adapt-information-1}.  
By obtaining a similar accuracy, we remark that the adaptive simulation is 4.91 times faster than the uniform grid refinement simulation. We can observe that there is a significant improvement in the CPU times by using the spatial grid adaptivity simulations. We can observe that this test case's experimental order of convergence reaches 0.78.

The performance of the residual error estimator with a second set of model parameters
is demonstrated in the following. We use the same computational setup as the previous test case except for the following essential parameters (set-2).
\begin{equation}\label{problem2}
    \rho = 0.0, ~ \alpha = 0.1, ~ \beta = 0.0, ~ \chi = 0.00005.
\end{equation}

To find the $L_2$ error in the adaptive simulations, we used the solution of uniform refinement 
of the coarse grid with grid level 7.  Also, in this test case, we can observe that the convergence rate
of the solution is better for adaptive simulations, and note that adaptive simulation is 5.28 times faster than the static finer uniform grid simulation. 
\begin{table}
\begin{center}
 \begin{tabular}{|p{2.0cm}|p{2.0cm}|p{2.0cm}|} \hline 
  No. of mesh points & $L_2$ error & CPU Time (seconds) \\  \hline
9,852  &  0.0361524  &   449.6  \\
10,473 &  0.0197031  &   511.7  \\
11,569 &  0.0128632  &   563.3  \\
14,910 &  0.0084189  &   869.1  \\
18,331 &  0.0056551  &  1058.6  \\
26,616 &  0.0032899  &  1316.0  \\
51,843 &  0.0017817  &  2464.5  \\ \hline
 \end{tabular}
\end{center}

 \caption{$L_2$ error estimates and the corresponding CPU times at different adaptive mesh levels for the second parameter set.}
\label{table:adapt-information-2}
\end{table}

The computational times for the second test case simulation are given in Table~\ref{table:adapt-information-2}.  In this test also, there is a significant improvement in the CPU times by using the spatial grid adaptivity simulations. For this test case, the experimental order of convergence reaches 0.88.

\section{Conclusions}
This paper analyzes residual-based a posteriori error estimates of a multi-scale cancer invasion model consisting of nonlinear reaction terms and sensitivity functions. 
We have derived a residual-based a posteriori error estimator for the coupled system and shown that it is reliable. We obtained an upper bound for the discretization error using the residual-based error estimator.
The numerical results were demonstrated for two different set of parameters where the first set of parameters creates a stiffer system than the second set.
In the case of using the first set of parameters, we obtained the experimental order of convergence is 
0.78, and using the second set of parameters, it is 0.88.  
These results allow us to expect some improved computational and theoretical estimates for 
a multi-scale cancer invasion model in the future.

\subsection*{Acknowledgment}
We gratefully acknowledge the financial support under R\&D projects leading to HPC Applications (DST/NSM/R\&D\_HPC\_Applications/2021/03.28), National Supercomputing Mission, India, and the support for high-performance computing time at Param Yukti, JNCASR, India. Also, we greatly acknowledge the support for high-performance computing time at the Padmanabha cluster, IISER Thiruvananthapuram, India.

\subsection*{Disclosure/Conflict-of-Interest Statement}
The authors declare that the research was conducted in the absence of any commercial or financial relationships that could be construed as a potential conflict of interest.

\bibliographystyle{abbrv}
\bibliography{ref}

\end{document}